\def\QQ{\mathbf{Q}}
\def\ZZ{\mathbf{Z}}
\def\<{\langle}
\def\>{\rangle}
\def\coker{\operatorname{coker}}
\def\Hom{\operatorname{Hom}}
\def\im{\operatorname{im}}
\def\Adj{{\mathbf{Adj}}}
\def\bb{\operatorname{b}}
\newtheorem{theorem}{Theorem}[section]
\newtheorem*{theorem*}{Main Theorem}
\newtheorem{proposition}[theorem]{Proposition}
\newtheorem{corollary}[theorem]{Corollary}
\newtheorem{lemma}[theorem]{Lemma}
\newtheorem{example}[theorem]{Example}
\newtheorem*{convention*}{Convention}
\newtheorem*{claim*}{Claim}
\author{Andrew Berget} \title{Critical groups of graphs with
  reflective symmetry}
\email{aberget@uw.edu}
\begin{document}

\begin{abstract}
  The critical group of a graph is a finite abelian group whose order
  is the number of spanning forests of the graph. For a graph $G$ with
  a certain reflective symmetry, we generalize a result of
  Ciucu--Yan--Zhang factorizing the spanning tree number of $G$ by
  interpreting this as a result about the critical group of $G$. Our
  result takes the form of an exact sequence, and explicit connections
  to bicycle spaces are made.
\end{abstract}

\address{Department of Mathematics, University of Washington, Seattle}
\keywords{Critical group, graph Laplacian, spanning trees, graph
  involution, bicycle space, involution}

\thanks{The author was partially supported as a VIGRE Fellow at UC
  Davis by NSF Grant DMS-0636297.}
\maketitle

\section{Introduction and Statement of Results}
A graph with \textbf{reflective symmetry} is a graph $G=(V,E)$ with a
distinguished, non-degenerate drawing in $\mathbf{R}^2$ such that
\begin{enumerate}
\item reflection about a line $\ell$ takes the drawing into itself,
  and
\item every edge that is fixed by this reflection about $\ell$ is
  fixed point-wise.
\end{enumerate}
For a graph with reflective symmetry, the reflection of the
distinguished drawing gives rise to an involution. This involution
will always be denoted $\phi$, and is a map $V \to V$ that induces a
map $E \to E$.

Condition (2) above means that no edge of $G$ crosses the axis of
symmetry.  Assuming that the line of reflection is vertical, the
drawing of $G$ gives rise to a partition of the edges $E$ of $G$ into
three blocks: $E = E_L \cup E^\phi \cup E_R$. The sets $E_L$ and $E_R$
denote the edges on the left and right sides of the reflection line
$\ell$, respectively. The set $E^\phi$ denotes the set of $\phi$-fixed
edges. Similarly, there is a partition of the vertices of $G$ as $V =
V_L \cup V^\phi \cup V_R$.

A subgraph of $G$ is specified by the edges of $G$ it contains, its
vertices being the endpoints of the specified edges.  A graph $G_+ =
(V_+,E_+)$ is obtained from $E_L \cup E^\phi$ by subdividing each
$\phi$-fixed edge (i.e., the edges in $E^\phi$).  A graph
$G_-=(V_-,E_-)$ is obtained from $E_R $ by identifying its
$\phi$-fixed vertices to a single vertex. 

\begin{example}
  Below we have a graph $G$ with reflective symmetry, along with $G_+$
  and $G_-$ (shown left-to-right). The shaded vertex is the one
  obtained by subdividing the $\phi$-fixed edge of $G$.
  \tikzstyle{vertex}=[draw, circle, minimum size=10pt, inner sep=1pt]
  \tikzstyle{subvertex}=[draw, circle, fill=red!40,minimum size=10pt,
  inner sep=1pt] \tikzstyle{edge} = [draw,thick,-]
$$
\begin{tikzpicture}
  \foreach \pos/\name in { {(0,-1)/b}, {(0,1)/c},{(-1,0)/a}, {(1,0)/d}}
    \node (\name) at \pos [vertex] {};
  \foreach \source/ \dest in { a/b, a/c, d/c,d/b,c/b}
    \path[edge] (\source) --  (\dest);
\end{tikzpicture}
\qquad\qquad
\begin{tikzpicture}
  \node (bc) at (0,0) [subvertex] {};
    \foreach \pos/\name in { {(0,-1)/b}, {(0,1)/c},
                            {(-1,0)/a}}
        \node (\name) at \pos [vertex] {};
    \foreach \source/ \dest in { a/b, a/c, c/bc,bc/b}
    \path[edge] (\source) --  (\dest);
\end{tikzpicture}
\qquad\qquad
\begin{tikzpicture}
  \node at (0,1) {}; \node at (0,-1) {};
    \foreach \pos/\name in { {(0,0)/bc},
                             {(1,0)/d}}
        \node (\name) at \pos [vertex] {};
    \draw [-] (bc) to [bend left](d);
    \draw [-] (d) to [bend left](bc);
\end{tikzpicture}
$$

\end{example}

This paper will be concerned with the critical group of a graph
possessing reflective symmetry. The critical group of a graph $G$ is a
finite abelian group, denoted $K(G)$, whose order is the number of
spanning forests of $G$. The number of spanning forests of a graph $G$
will be denoted $\kappa(G)$. 

The critical groups is also known as the \textit{Jacobian group} or,
\textit{Picard group} of the graph, and is intimately connected with
the \textit{abelian sandpile model} and a \textit{chip firing game}
played on the vertices of $G$. We will define $K(G)$ formally in
Section~\ref{sec:critical}, and discuss functorial properties in some
depth in the appendix. We will not discuss any connections to
chip firing games.

A theorem of Ciucu, Yan and Zhang \cite{ciucuYanZhang} motivates our
work.
\begin{theorem}[Ciucu--Yan--Zhang]\label{thm:motivation}
  Let $G$ be a planar graph with reflective symmetry, with axis of symmetry $\ell$. Then
  \[
  \kappa(G) = 2^{\omega(G)} \kappa(G_+) \kappa(G_-),
  \]
  where $\omega(G)$ is the number of bounded regions intersected by
  $\ell$.
\end{theorem}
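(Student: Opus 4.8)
The plan is to establish the stated identity \emph{at the level of orders} by combining Kirchhoff's matrix--tree theorem with the representation theory of $\langle\phi\rangle\cong\mathbf{Z}/2$; the sharper statement about $K(G)$ I would leave aside. First I would reduce to the case that $G$ is connected: a disconnected symmetric drawing is (after harmless perturbation) a disjoint union of symmetric drawings, and $\kappa(G)$, $\kappa(G_\pm)$ and $2^{\omega(G)}$ are multiplicative over these. Then $V^\phi\neq\emptyset$, so I fix a vertex $v_0\in V^\phi$ and write $\kappa(G)=\det\tilde L$, where $\tilde L$ is the reduced Laplacian obtained from the graph Laplacian $L(G)$ by deleting the row and column of $v_0$. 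Since $\phi$ fixes $v_0$, its permutation matrix still acts on $\mathbf{R}^{V\setminus v_0}$ and commutes with $\tilde L$; the eigenspace decomposition $\mathbf{R}^{V\setminus v_0}=W_+\oplus W_-$ into symmetric and antisymmetric functions is an orthogonal, $\tilde L$-invariant splitting, so $\kappa(G)=\det(\tilde L|_{W_-})\cdot\det(\tilde L|_{W_+})$.

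The second step identifies the two blocks with $G_-$ and $G_+$. An antisymmetric function vanishes on $V^\phi$ and is determined by its restriction to $V_L$; because there are no edges between $V_L$ and $V_R$, one computes directly that in the coordinate $r\in\mathbf{R}^{V_L}$ the operator $\tilde L|_{W_-}$ is the principal submatrix $A=L(G)_{V_L,V_L}$, i.e.\ the Laplacian of the left subgraph $(V_L\cup V^\phi,E_L)$ grounded along $V^\phi$. By the all-minors matrix--tree theorem $\det A$ counts spanning forests of that subgraph rooted at $V^\phi$, and contracting $V^\phi$ to a point turns these into spanning trees of $G_-$ (connectivity of $G$ makes the contraction connected), so $\det(\tilde L|_{W_-})=\kappa(G_-)$. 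For the symmetric block, in the coordinates $(p,q)\in\mathbf{R}^{V_L}\times\mathbf{R}^{V^\phi\setminus v_0}$ one finds $\tilde L|_{W_+}=\bigl(\begin{smallmatrix}A & B'\\ 2(B')^T & D'\end{smallmatrix}\bigr)$, the $2$ arising because the $V_L$- and $V_R$-contributions to a $\phi$-fixed vertex are counted together. I would then run the same reduction on $G_+$: a Schur complement on $L(G_+)$ eliminating the degree-two subdivision vertices (whose block is $2I$) produces, after rescaling one block row, a matrix of exactly this shape, the subdivisions contributing $|E^\phi|$ factors of $2$ and the rescaling removing $|V^\phi|-1$ of them; tracking the bookkeeping gives $\det(\tilde L|_{W_+})=2^{\,|V^\phi|-|E^\phi|-1}\kappa(G_+)$ (again $G_+$ is connected since $G$ is).

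Combining, $\kappa(G)=2^{\,|V^\phi|-|E^\phi|-1}\,\kappa(G_+)\,\kappa(G_-)$, so everything reduces to the identity $\omega(G)=|V^\phi|-|E^\phi|-1$; this is the one place planarity is used, and I expect it to be the main obstacle. The vertices of $V^\phi$ lie on $\ell$ and the edges of $E^\phi$ are drawn along $\ell$, so the drawing of $(V^\phi,E^\phi)$ sits inside the line $\ell$; since edges may not overlap, $(V^\phi,E^\phi)$ is a disjoint union of points and subintervals of $\ell$, in particular a forest, whence $|V^\phi|-|E^\phi|$ equals its number $c^\phi$ of components. Ordering these as $C_1<\dots<C_{c^\phi}$ along $\ell$, the set $\ell\setminus G=\ell\setminus(C_1\cup\dots\cup C_{c^\phi})$ is a union of $c^\phi+1$ open intervals; the two unbounded ones lie in the outer face, while for $1\le j\le c^\phi-1$ the interval between $C_j$ and $C_{j+1}$ is enclosed by a path of $G$ joining $C_j$ to $C_{j+1}$ inside one closed half-plane (obtained by folding an arbitrary such path across $\ell$) together with its mirror image, hence lies in a bounded face; a Jordan-curve argument shows these $c^\phi-1$ faces are distinct and exhaust the bounded faces met by $\ell$. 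Thus $\omega(G)=c^\phi-1$, completing the proof.

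Two remarks. The algebraic skeleton --- diagonalize under $\phi$, recognize the blocks as grounded or contracted Laplacians of $G_\pm$, Schur out the subdivision vertices --- is essentially forced once one spots the symmetry; the genuinely delicate points are the careful accounting of powers of $2$ (the change-of-basis matrices between the eigenspace decomposition and the standard vertex bases are not unimodular, which is exactly what produces them) and the planar lemma $\omega(G)=c^\phi-1$. Finally, because those powers of $2$ reflect honest $2$-torsion, one should be able to replace determinants by integral cokernels throughout and assemble the cokernels of the blocks into an exact sequence linking $K(G)$, $K(G_+)$, $K(G_-)$ and a group $(\mathbf{Z}/2)^{\omega(G)}$ --- presumably the form the paper's main theorem takes.
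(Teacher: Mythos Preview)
Your argument is correct in outline and, once the Schur--complement bookkeeping is written out (it does indeed give $D_+-\tfrac12 CC^T=\tfrac12 D'$, hence the factor $2^{|V^\phi|-|E^\phi|-1}$), it yields a complete proof of the enumerative identity.  It is, however, a genuinely different route from the paper's.  The paper does not re-prove the Ciucu--Yan--Zhang theorem at all: it is quoted as motivation, and what the paper actually establishes is the critical-group Main~Theorem, from which the version with exponent $|V^\phi|-|E^\phi|-1$ (the Corollary) drops out by taking orders in the $\ker$--$\coker$ exact sequence.  That argument never touches the Laplacian block decomposition; instead it builds the edge-level map $f:\ZZ E_+\oplus\ZZ E_-\to\ZZ E$, shows $f^*$ has $2$-torsion kernel and cokernel, and then computes $|\coker f^*|/|\ker f^*|$ by comparing dimensions of $\phi$- and $\psi$-invariant cycle and bond spaces over $\ZZ/2$.

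What each approach buys: your eigenspace/matrix--tree proof is shorter and entirely elementary for the spanning-tree count, and it isolates cleanly where planarity enters (only in the lemma $\omega(G)=c^\phi-1$, which the paper never states or proves).  The paper's approach is longer for the bare enumeration but delivers the structural upgrade you anticipate in your final remark: an explicit homomorphism $K(G_+)\oplus K(G_-)\to K(G)$ with kernel and cokernel identified as spaces of symmetric bicycles, rather than just a relation among determinants.  Your closing guess that the $2$-powers should assemble into an exact sequence is exactly the content of the Main~Theorem, though the $2$-group that appears is not literally $(\ZZ/2)^{\omega(G)}$ but the quotient of two such groups.
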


We offer a generalization of their result at the level of critical
groups.
\begin{theorem*}
  Let $G$ be a graph with reflective symmetry, and $G_+$, $G_-$ the
  two graphs this symmetry gives rise to. There is a group
  homomorphism $$f^*:K(G_+) \oplus K(G_-) \to K(G),$$ such that $\ker(f^*)$
  and $\coker(f^*)$ are all $2$-torsion.
  
  The kernel and cokernel of $f$ can be explicitly determined as the
  space of bicycles in $G$ and $G_+ \cup G_-$, respectively, possessing
  certain symmetry properties. If $G_+$ is connected then,
  \[
  \frac{|K(G)|}{|K(G_+) \oplus K(G_-)|}=\frac{|\coker(f^*)|}{|\ker(f^*)|} = 2^{|V^\phi| - |E^\phi| - 1}.
  \]
\end{theorem*}
One obtains a version of Theorem~\ref{thm:motivation} by taking the
orders of the groups in the exact sequence
\[
0 \to \ker(f^*) \to K(G_+) \oplus K(G_-) \to K(G) \to \coker(f^*) \to
0.
\]
\begin{corollary}
  If $G$ is a graph with reflective symmetry and $G_+$ is connected
  then,
  \[
  \kappa(G) = 2^{|V^\phi|-|E^\phi|-1} \kappa(G_+) \kappa(G_-).
  \]
\end{corollary}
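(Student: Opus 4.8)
The plan is to obtain the Corollary as the purely numerical consequence of the Main Theorem, using only the defining property that the order of the critical group of a graph is its number of spanning forests. First I would record the three identities $|K(G)| = \kappa(G)$, $|K(G_+)| = \kappa(G_+)$ and $|K(G_-)| = \kappa(G_-)$, each of which is part of the definition of the critical group given in Section~\ref{sec:critical}. Since $K(H)$ is finite for every graph $H$, the direct sum has order $|K(G_+)\oplus K(G_-)| = |K(G_+)|\cdot|K(G_-)| = \kappa(G_+)\kappa(G_-)$, and in particular all four groups in the Main Theorem's exact sequence are finite.

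Next I would invoke the exact sequence
\[
0 \to \ker(f^*) \to K(G_+) \oplus K(G_-) \to K(G) \to \coker(f^*) \to 0
\]
furnished by the Main Theorem, together with the standard fact that in any exact sequence of finite abelian groups the alternating product of the orders equals $1$. This gives
\[
|\ker(f^*)|\cdot|K(G)| = |K(G_+)\oplus K(G_-)|\cdot|\coker(f^*)|,
\]
so that $|K(G)|\big/|K(G_+)\oplus K(G_-)| = |\coker(f^*)|\big/|\ker(f^*)|$. Under the hypothesis that $G_+$ is connected, the Main Theorem evaluates this common ratio as $2^{|V^\phi| - |E^\phi| - 1}$. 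Substituting the orders computed in the first step then yields $\kappa(G) = 2^{|V^\phi|-|E^\phi|-1}\,\kappa(G_+)\,\kappa(G_-)$, which is exactly the asserted formula.

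There is essentially no obstacle here: all of the mathematical content is already carried by the Main Theorem, and the Corollary is merely its shadow after passing to orders. The only points requiring any care are bookkeeping ones — confirming that the same normalization of ``number of spanning forests'' is applied to $G$, $G_+$ and $G_-$ (so that $|K(\cdot)| = \kappa(\cdot)$ can be used uniformly), and noting that the finiteness of the critical groups is what makes the division of orders legitimate. Both are immediate from the definitions, so the proof amounts to a one-line order count off the exact sequence.
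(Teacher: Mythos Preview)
Your proposal is correct and matches the paper's approach exactly: the paper states that the Corollary follows ``by taking the orders of the groups in the exact sequence,'' which is precisely the alternating-product-of-orders argument you spell out. The only difference is cosmetic --- the Main Theorem as stated already records the equality $|K(G)|/|K(G_+)\oplus K(G_-)| = 2^{|V^\phi|-|E^\phi|-1}$, so one could substitute $\kappa(\cdot)$ for $|K(\cdot)|$ immediately without rederiving that ratio from the exact sequence.
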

The reader will notice that the main theorem really pertains to graphs
with an involutive automorphism whose fixed edges have fixed
vertices. Indeed any such graph possess a drawing that makes it a
graph with reflective symmetry, as we have defined it: Draw the
quotient $G/\phi$ so that its fixed edges lay along a straight line
$\ell$. This drawing along with its reflection across $\ell$ yields
the desired drawing of $G$.

This paper is organized as follows. In Section~\ref{sec:critical} we
recall the definition of the critical group of a graph. In
Section~\ref{sec:map} we define the group homomorphism alluded to in
the Main Theorem. We then identify some basic properties of this
map. In Section~\ref{sec:ker-coker} we explicitly identify the kernel
and cokernel of the maps, and in Section~\ref{sec:relate} we relate
the order of these two objects. Finally, we include an appendix that
contains many technical facts on critical groups that are not
available in the published literature.

\section{Critical groups of graphs}\label{sec:critical}
For this section only, $G=(V,E)$ is an arbitrary graph. Orient the
edges of $G$ arbitrarily, and form the usual boundary map
\[
\partial = \partial(G) :\ZZ E \to \ZZ V.
\]
We follow the convention that the negative of an oriented edge $e \in
\ZZ E$ corresponds to that edge with opposite orientation. Thus, if
$e=uv$ is an oriented edge, then $uv = -vu$.  

Both $\ZZ E$ and $\ZZ V$ come with distinguished bases, and hence
orthonormal forms. There is, thus, an adjoint (or transpose) map,
\[
\partial^t : \ZZ V \to \ZZ E,
\]
given by the coboundary operator. The \textbf{bond (or cut) space} $B$ of
$G$ is the image of $\partial^t$. The \textbf{cycle space} $Z$ of $G$ is
the kernel of $\partial$. These spaces are free modules and are
orthogonal under the above form.

The bond space of $G$ is generated, as a free $\ZZ$-module, by the
fundamental bonds at the vertices of $G$, omitting one vertex from each
connected component. Given a vertex of $G$, this is the element
\[
\bb_G(v) := \sum_{u \sim v} uv \in \ZZ E.
\]
More generally, the fundamental bond of a subset $S \subset V$ is
$\bb_G(S) = \sum_{v \in S} \bb_G(v)$.

The cycle space of $G$ is generated, as free $\ZZ$-module, by oriented
circuits of $G$. That is, if $v_1 \to v_2 \to \dots \to v_\ell \to
v_{\ell+1}=v_1$ is an oriented circuit in $G$ then $\sum_{i=1}^\ell
v_iv_{i+1}$ is an element of the cycle space of $G$, and such elements
generate $Z$.

Following Appendix~\ref{sec:theory}, we define the \textbf{critical
  group of $G$} to be the quotient
\[
K(G):=(\ZZ E)/(Z+B).
\]
This is a finite abelian group whose order is the number of spanning
forests of $G$, which we denote by $\kappa(G)$.

\section{A map arising from a reflective symmetry}\label{sec:map}
Let $G=(V,E)$ be a graph with reflective symmetry, and
$G_+=(V_+,E_+)$, $G_-=(V_-,E_-)$ the left and right graphs this gives
rise to, as in the introduction. Let $\phi$ denote the involution
determined by the reflective symmetry.

The edges of $G_+$ come in two flavors: Those that are simply edges
from $E_L$, the left half $G$, and those that were obtained by
subdividing $\phi$-fixed edges of $G$. Edges of $G_-$ correspond
uniquely to edges in $E_R$, the right half of $G$. We will often abuse
notation and identify edges in $G_+$ or $G_-$ that come from edges in
$G$ with the corresponding edge in $G$. A similar identification will be
made for vertices in $G_+$ and $G_-$ arising from vertices in $G$.

Orient the edges of $G$ in such a way that $\phi$ is involution of
directed graphs. In this way we obtain induced orientations of $G_+$
and $G_-$.
\tikzstyle{vertex}=[draw, circle, minimum size=10pt, inner sep=1pt]
\tikzstyle{subvertex}=[draw, circle, fill=red!40,minimum size=10pt, inner sep=1pt]
\tikzstyle{edge} = [draw,thick,->,>=stealth',shorten >=1pt]
\begin{example}
  Here we display an orientation of $G$ that is $\phi$-fixed, and the
  induced orientations on $G_+$ and $G_-$.
  $$
  \begin{tikzpicture}
    \foreach \pos/\name in { {(0,-1)/b}, {(0,1)/c},{(-1,0)/a}, {(1,0)/d}}
    \node (\name) at \pos [vertex] {};
    \foreach \source/ \dest in { a/b, a/c, d/c,d/b,c/b}
    \path[edge] (\source) --  (\dest);
  \end{tikzpicture}
  \qquad\qquad
  \begin{tikzpicture}
    \node (bc) at (0,0) [subvertex] {};
    \foreach \pos/\name in { {(0,-1)/b}, {(0,1)/c},
      {(-1,0)/a}}
    \node (\name) at \pos [vertex] {};
    \foreach \source/ \dest in { a/b, a/c, c/bc,bc/b}
    \path[edge] (\source) --  (\dest);
  \end{tikzpicture}
  \qquad\qquad
  \begin{tikzpicture}
    \node at (0,1) {}; \node at (0,-1) {};
    \foreach \pos/\name in { {(0,0)/bc},
      {(1,0)/d}}
    \node (\name) at \pos [vertex] {};
    \draw [draw,thick,->,>=stealth',shorten >=1pt] (d) to [bend right] (bc);
    \draw [draw,thick,->,>=stealth',shorten >=1pt] (d) to [bend left] (bc);
  \end{tikzpicture}
  $$  
\end{example}
Our primary goal in this section is to define a map
\[
f^*:K(G_+) \oplus K(G_-) \to K(G).
\]
For this we define a $\ZZ$-linear map
\[
f: \ZZ E_+ \oplus \ZZ E_- \to \ZZ E,
\]
that will take cycles to cycles and bonds to bonds, and $f^*$ will be
the induced map on critical groups. If $e \in E_+$ is an edge obtained
from a non-fixed edge of $E$, define
\[
f(e,0) := e + \phi(e).
\]
If $e \in E_+$ is obtained by subdividing an edge $e' \in E$, we set
$f(e,0) := e'$. For an edge $e \in E_-$, which we think of as an edge
in $E$, we set
\[
f(0,e) := e - \phi(e).
\]
There is the usual adjoint map $f^t : \ZZ E \to \ZZ E_+ \oplus \ZZ
E_-$, characterized by the property that $\<f(e',e''),e\> =
\<(e',e''),f^t(e)\>$. Specifically, for $e \in E^\phi$, let $e'$ and
$e''$ be the edges of $G_+$ this gives rise to. Then $f^t(e) =
(e'+e'',0)$. If $e \in E_L$ then $f^t(e) = (e,-\phi(e))$, and for $e
\in E_R$ we have $f^t(e) = (\phi(e),e)$.

Denote the cycle and bond spaces of $G_+$ by $Z_+$ and
$B_+$. Similarly denote the cycle and bond spaces of $G_-$ by $Z_-$
and $B_-$.
\begin{proposition}
  The map $f$ takes $Z_+ \oplus Z_-$ into $Z$, and takes $B_+ \oplus
  B_-$ into $B$.
\end{proposition}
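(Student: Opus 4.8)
The plan is to verify the two containments directly on generators, using the explicit formulas for $f$ on the two types of edges of $G_+$ and on edges of $G_-$. Since $Z_+ \oplus Z_-$ is generated by oriented circuits in $G_+$ (paired with $0$) and oriented circuits in $G_-$ (paired with $0$), and similarly $B_+ \oplus B_-$ is generated by fundamental bonds $\bb_{G_+}(v)$ and $\bb_{G_-}(w)$, it suffices to check that $f$ sends each such generator into $Z$ or $B$ respectively. The governing principle throughout is that $\phi$ is an involution of directed graphs, so that the induced map $\phi: \ZZ E \to \ZZ E$ commutes with $\partial$; consequently $\phi$ preserves both $Z$ and $B$, and the operators $1+\phi$ and $1-\phi$ on $\ZZ E$ carry $Z \to Z$ and $B \to B$.

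First I would handle the cycle space. For a circuit $\gamma$ in $G_-$, every edge is (identified with) an edge of $E_R \subseteq E$, and $f(0,\gamma) = \gamma - \phi(\gamma)$; since $\gamma \in Z$ (viewing $\gamma$ as a sum of oriented edges in $G$) and $\phi$ preserves $Z$, this lies in $Z$. For a circuit $\gamma$ in $G_+$, the subtlety is the subdivided edges: a single traversal of a subdivision vertex uses two consecutive half-edges $e', e''$, and $f$ sends the corresponding portion $e' + e''$ (or its negative) to the original edge $e$ of $E^\phi$, while on a non-fixed edge $e$ of $E_L$ it contributes $e + \phi(e)$. I would argue that $f(\gamma,0)$ is exactly obtained from the image $\gamma^\circ$ of $\gamma$ under the ``un-subdivision'' identification $\ZZ E_+ \to \ZZ(E_L \cup E^\phi) \subseteq \ZZ E$ by then applying $1+\phi$ on the $E_L$-part and the identity on the $E^\phi$-part (noting $\phi$ fixes $E^\phi$ edgewise, so $1+\phi$ restricted there is $2\cdot\mathrm{id}$, not the identity — this is the one place care is needed, and I would phrase $f$ on $G_+$ uniformly as: un-subdivide, then apply the restriction of $1+\phi$, then divide the $E^\phi$-coordinates by $2$). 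Cleaner: un-subdivision already sends a circuit of $G_+$ to a circuit of $G$ supported on $E_L \cup E^\phi$, hence into $Z$; applying $1+\phi$ keeps it in $Z$; and on the $\phi$-fixed coordinates $(1+\phi)$ acts as doubling, so $f(\gamma,0)$ is a $\ZZ$-combination of a $Z$-element and its halves along fixed edges — I must instead check that the combination $f$ actually produces is the circuit's un-subdivision plus $\phi$ of its $E_L$-restriction, which is visibly a sum of two cycle-space elements. I expect this bookkeeping around subdivided edges to be the main obstacle; the honest fix is to observe that a circuit in $G_+$ traverses each subdivision vertex's two half-edges the same number of (signed) times, so in $f(\gamma,0)$ the fixed edge $e'$ appears with that common coefficient, and the un-subdivided chain is genuinely a circuit, so no halving issue arises.

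For the bond space, I would use $f^t$ and the duality $\langle f(x), y\rangle = \langle x, f^t(y)\rangle$, or argue directly. Directly: $\bb_{G_-}(w) = \sum_{u \sim w} uw$ in $G_-$; if $w$ is an ordinary vertex of $G_-$ coming from $V_R$, then unwinding the identification, $f$ of this bond equals $\bb_G(w) - \phi(\bb_G(w)) = (1-\phi)\bb_G(w) \in B$. If $w$ is the identified $\phi$-fixed vertex of $G_-$, the star at $w$ in $G_-$ is the union of the $G$-stars at all vertices of $V^\phi$ restricted to $E_R$, and one checks $f$ sends it to $(1-\phi)\bb_G(V^\phi$-part$)$, again in $B$ since $\phi$ preserves $B$. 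For a vertex $v$ of $G_+$: if $v \in V_L$, then $f(\bb_{G_+}(v),0) = \bb_G(v) + \phi(\bb_G(v)) = (1+\phi)\bb_G(v) \in B$; if $v$ is a subdivision vertex of a fixed edge $e = xy$, its star in $G_+$ is $e' + e''$ (suitably oriented), and $f$ sends it to $\pm(e - e) = 0 \in B$, or to $\pm 2e$ depending on orientation conventions — I would fix the induced orientation so this is the telescoping $0$, or note $2e = (1+\phi)e = (1+\phi)(\text{something in }B)$ is not automatic, so instead check it equals $\bb_G(v)$ for the subdivision... but $v$ is not in $G$; the correct statement is that the subdivision-vertex bond maps to $e' + e''$'s image $=$ the coboundary relation, which lies in $B_+$'s image and hence, being $\partial^t$ of a vertex indicator, maps under $f$ into $B$ by the $f^t$ computation $f^t$ gave for $E^\phi$. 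The clean argument: $f$ restricted to $B_+ \oplus B_-$ can be checked on the generating coboundaries $\partial_{G_+}^t(v)$, $\partial_{G_-}^t(w)$, and the stated formulas for $f^t$ show $f \circ (\partial_{G_+}^t \oplus \partial_{G_-}^t) = \partial_G^t \circ g$ for an explicit linear $g: \ZZ V \to \ZZ V_+ \oplus \ZZ V_-$ (built from $\phi$ and the vertex identifications), whence the image lands in $\im \partial_G^t = B$. I would present the bond case via this intertwining identity, as it is cleaner than generator-chasing, and present the cycle case via the un-subdivision-then-$(1\pm\phi)$ description, flagging the subdivision coefficient bookkeeping as the sole delicate point.
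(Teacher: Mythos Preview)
Your strategy---check on generators using the $(1\pm\phi)$ structure---is the same as the paper's, and for bonds your direct computation essentially reproduces what the paper does (it simply writes down $f(\bb_{G_\pm}(v),0)$ or $f(0,\bb_{G_-}(v))$ for each type of vertex $v$ and observes the result is a bond of $G$; in particular the subdivision-vertex bond maps to $0$). However, there is a genuine error in your treatment of cycles from $G_-$.

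You assert that a circuit $\gamma$ of $G_-$, viewed as a chain in $\ZZ E$ supported on $E_R$, already lies in $Z$. This fails whenever $\gamma$ passes through the contracted vertex. In the paper's running example $G_-$ is a $2$-cycle; its unique circuit lifts to the chain $db+cd\in\ZZ E$, whose boundary is $c-b\neq 0$. What is true is that $\partial_G(\gamma)$ is supported on $V^\phi$ (at every vertex of $V_R$ the boundary in $G$ agrees with that in $G_-$, which vanishes), and since $\phi$ fixes $V^\phi$ pointwise and commutes with $\partial_G$ one gets $\partial_G(\phi(\gamma))=\phi(\partial_G(\gamma))=\partial_G(\gamma)$, whence $\partial_G(\gamma-\phi(\gamma))=0$. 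So $f(0,\gamma)\in Z$ holds, but not for the reason you give; the antisymmetrization is doing real work here, not just preserving an existing cycle.

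Your $G_+$ cycle argument is also off in its final form. If a circuit $\gamma$ in $G_+$ traverses both half-edges of a subdivided $e\in E^\phi$ with common signed multiplicity $c$, then $f(\gamma,0)$ carries coefficient $2c$ on $e$, not $c$, since each half-edge maps to $e$ under $f$. The clean statement you almost reached is $f(\gamma,0)=(1+\phi)(\gamma^\circ)$, where $\gamma^\circ\in Z$ is the un-subdivided circuit; on $E^\phi$ the operator $1+\phi$ acts by doubling, which matches the $2c$, so there is no halving issue at all. Once phrased this way the $G_+$ case is immediate.
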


\begin{example}
  We illustrate the proposition in our running example. Edges with
  arrows have coefficient $+1$ oriented in the indicated
  direction. Edges with larger coefficients are indicated. Edges
  without arrows have coefficient zero.

  Here we map a cycle $z$ in $G_+$ to a cycle in $G$ as $f(z,0)$.
  $$
  \begin{tikzpicture}
    \node (bc) at (0,0) [subvertex] {};
    \foreach \pos/\name in { {(0,-1)/b}, {(0,1)/c},
      {(-1,0)/a}}
    \node (\name) at \pos [vertex] {};
    \foreach \source/ \dest in { b/a, a/c, c/bc,bc/b}
    \path[edge] (\source) --  (\dest);
    \node at (2,0) {$\mapsto$};
    \node (aa) at (4,1) [vertex] {}; \node (cc) at (4,-1) [vertex] {};
    \node (bb) at (3,0) [vertex] {};\node (dd) at (5,0) [vertex] {};
    \draw [draw,thick,->,>=stealth',shorten >=1pt] (dd) to (aa);
    \draw [draw,thick,->,>=stealth',shorten >=1pt] (cc) to (dd);
    \draw [draw,thick,->,>=stealth',shorten >=1pt] (cc) to (bb);
    \draw [draw,thick,->,>=stealth',shorten >=1pt] (bb) to (aa);
    \draw [draw,thick,->,>=stealth',shorten >=1pt] (aa) to (cc);
    \node at (4.2,0) {$2$};
  \end{tikzpicture}
  $$
  Here we map a cycle $z$ in $G_-$ to a cycle in $G$ as $f(0,z)$.
  $$
  \begin{tikzpicture}
    \node at (0,1) {}; \node at (0,-1) {};
    \foreach \pos/\name in { {(0,0)/bc},
      {(1,0)/d}}
    \node (\name) at \pos [vertex] {};
    \draw [draw,thick,->,>=stealth',shorten >=1pt] (bc) to [bend left] (d);
    \draw [draw,thick,->,>=stealth',shorten >=1pt] (d) to [bend left] (bc);
    \node at (2,0) {$\mapsto$};
    \node (aa) at (4,1) [vertex] {}; \node (cc) at (4,-1) [vertex] {};
    \node (bb) at (3,0) [vertex] {};\node (dd) at (5,0) [vertex] {};
    \draw [draw,thick,->,>=stealth',shorten >=1pt] (aa) to (dd);
    \draw [draw,thick,->,>=stealth',shorten >=1pt] (dd) to (cc);
    \draw [draw,thick,->,>=stealth',shorten >=1pt] (cc) to (bb);
    \draw [draw,thick,->,>=stealth',shorten >=1pt] (bb) to (aa);
    \draw [draw,thick,-] (aa) to (cc);
  \end{tikzpicture}
  $$
\end{example}

\begin{proof}
  We leave the proof of the statement about cycles to the reader,
  confident that the example will guide their proof.

  For the second part of the proposition it is sufficient to observe
  the following. First, if $v \in V_+$ is one of the vertices obtained
  by subdividing a $\phi$-fixed edge of $G$, then $f(\bb_{G_+}(v),0)
  = 0$. If $v \in V_+$ comes from a $\phi$-fixed vertex of $G$ then
  $f(\bb_{G_+}(v),0) = \bb_G(v)$. If $v \in V_+$ is any other vertex
  then $f(\bb_{G_+}(v),0) = \bb_G(v) + \bb_G(\phi(v))$.

  We now consider the case of $v \in V_-$. If $v$ is the vertex
  obtained by contracting $V^\phi \subset V$ to a point then
  $f(0,\bb_{G_-}(v)) = \bb_G(V_L)-\bb_G(V_R)$. For any other vertex,
  $f(0,\bb_{G_-}(v)) = \bb_G(v) - \bb_G(\phi(v))$.
\end{proof}

It follows that $f$ induces a natural map,
\[
f^* : K(G_+) \oplus K(G_-) \to K(G),
\]
on the quotient spaces. The adjoint $f^t$ induces a map going the
opposite direction
\[
(f^t)^* : K(G) \to K(G_+) \oplus K(G_-).
\]

\begin{proposition}
  The kernel and cokernel of $f^*$ are $2$-torsion. 
\end{proposition}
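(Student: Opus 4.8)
The plan is to use the adjoint-induced map $(f^t)^*\colon K(G)\to K(G_+)\oplus K(G_-)$ as an approximate inverse to $f^*$, by showing that both composites
\[
(f^t)^*\circ f^*\colon K(G_+)\oplus K(G_-)\to K(G_+)\oplus K(G_-),\qquad
f^*\circ (f^t)^*\colon K(G)\to K(G)
\]
are multiplication by $2$. Granting this, the proposition is immediate: if $x\in\ker f^*$ then $2x=(f^t)^*(f^*(x))=0$, so $\ker f^*$ is killed by $2$; and for any $y\in K(G)$ we have $2y=f^*\bigl((f^t)^*(y)\bigr)\in\im f^*$, so $\coker f^*=K(G)/\im f^*$ is killed by $2$.

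Before that, I would verify that $(f^t)^*$ is well defined, i.e.\ that $f^t$ carries $Z+B$ into $(Z_++B_+)\oplus(Z_-+B_-)$. For the cycle space this is formal: if $z\in Z$ and $w\in B_+\oplus B_-$, then $\<f^t(z),w\>=\<z,f(w)\>=0$ because $f(w)\in B$ by the preceding Proposition and $Z\perp B$; hence $f^t(z)$ is orthogonal to $B_+\oplus B_-$, and as $Z_+\oplus Z_-$ is the orthogonal complement of $B_+\oplus B_-$ and is saturated (being a kernel of an integer matrix), $f^t(z)\in Z_+\oplus Z_-$. For the bond space one computes directly on fundamental bonds, in the style of the Proposition: for $v\in V_L$ one finds $f^t(\bb_G(v))=(\bb_{G_+}(v),-\bb_{G_-}(\phi(v)))$, the case $v\in V_R$ is symmetric, and for $v\in V^\phi$ one gets a $\ZZ$-linear combination of $\bb_{G_+}(v)$ and the $\bb_{G_+}(m)$, with $m$ ranging over the subdivision vertices of the $\phi$-fixed edges incident to $v$. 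In every case $f^t(\bb_G(v))\in B_+\oplus B_-$.

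The heart of the argument is the computation of the two composites on the distinguished edge generators. On $\ZZ E$ the composite $f\circ f^t$ equals multiplication by $2$ on the nose: for $e\in E^\phi$ with halves $e',e''$ in $G_+$, $f(f^t(e))=f(e',0)+f(e'',0)=e+e=2e$; for $e\in E_L$, $f(f^t(e))=f(e,0)+f(0,-\phi(e))=(e+\phi(e))+(e-\phi(e))=2e$; and $e\in E_R$ is symmetric. Hence $f^*\circ(f^t)^*=2\cdot\mathrm{id}_{K(G)}$. On $\ZZ E_+\oplus\ZZ E_-$, the composite $f^t\circ f$ is multiplication by $2$ on the summand $\ZZ E_-$ and on the edges of $\ZZ E_+$ coming from $E_L$, by the analogous bookkeeping. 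The one place this is more delicate is an edge $\bar e$ of $G_+$ obtained by subdividing a $\phi$-fixed edge $\epsilon\in E$: there $f^t(f(\bar e,0))=f^t(\epsilon)=(\bar e_1+\bar e_2,0)$ with $\bar e_1,\bar e_2$ the two halves of $\epsilon$, and since $\bar e_1-\bar e_2=\pm\bb_{G_+}(m_\epsilon)\in B_+$ this equals $2(\bar e,0)$ in $K(G_+)$ no matter which half $\bar e$ is. So $(f^t)^*\circ f^*=2\cdot\mathrm{id}_{K(G_+)\oplus K(G_-)}$, and the proposition follows.

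The only genuine obstacle, and the step I would handle most carefully, is precisely this: $f^t\circ f$ is \emph{not} multiplication by $2$ as a map of free modules, becoming so only after quotienting out the bond space, because the two halves of a subdivided $\phi$-fixed edge are identified in $K(G_+)$ but not in $\ZZ E_+$. Everything else is a routine sign-chase over the three edge types $E_L$, $E^\phi$, $E_R$.
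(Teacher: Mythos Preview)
Your argument is correct and the edge-by-edge computations you carry out are exactly the ones in the paper's proof. The organizational difference is this: the paper invokes Pontryagin duality (its Proposition~\ref{prop:pontryagin}) to identify $\ker(f^*)\approx\coker((f^t)^*)$, and then shows separately that $\coker(f)$ and $\coker(f^t)$ (modulo bonds) are $2$-torsion; you instead compute the composites $f^*\circ(f^t)^*$ and $(f^t)^*\circ f^*$ directly and observe that both equal multiplication by~$2$. These are two packagings of the same calculation---your preimages of $2e$ and $2(e,0)$ are precisely $f^t(e)$ and $f(e,0)$, which is what the paper writes down without naming them as such. What your route buys is self-containment: you do not need the duality statement from the appendix, at the cost of having to verify by hand that $f^t$ respects cycles and bonds (which the paper gets for free from its $\Adj$ machinery). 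Your handling of the one nontrivial case---the subdivided $\phi$-fixed edge, where $f^t\circ f$ is only multiplication by~$2$ modulo $B_+$---matches the paper's treatment exactly.
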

\begin{proof}
  By Proposition~\ref{prop:pontryagin} it is sufficient to prove that
  $f^*$ and $(f^t)^*$ have cokernels that are $2$-torsion.

  Choose an edge $e \in E$. If $e$ is $\phi$-fixed then $e \in
  \im(f)$. If $e$ is not $\phi$-fixed, suppose that $e$ is on the left
  half of $G$. We may view $e$ and $\phi(e)$ as edges in $G_+$ and
  $G_-$ and compute,
  \[
  f(e,-\phi(e)) = e+\phi(e)-\phi(e) + e = 2e.
  \]
  We conclude from this that $\coker(f)$ is $2$-torsion, and hence
  $\coker(f^*)$ is too.

  Choose an edge $e' \in E_+$. If $e'$ was obtained by subdividing $e
  \in E$, let $e'' \in E_+$ be the other edge obtained in this way. We
  compute,
  \[
  f^t(e) = e' + e'' \equiv e' + e'' + (e' - e'') = 2e' \mod B_+,
  \]
  since $e'-e''$ is a bond of $G_+$.

  If $e \in E_+$ did not arise from subdividing an edge of $G$, then
  we may identify $e$ with an edge $e$ of $G$. We have
  \[
  f^t( e-\phi(e) ) = f^t(e) - f^t(\phi(e)) =(e, \phi(e)) -
  (-e,\phi(e)) = 2(e,0).
  \]
  A similar computation shows that if $e \in E_-$ then $2(0,e) \in
  \im(f^t)$. It follows that $\coker((f^t)^*)$ is $2$-torsion.
\end{proof}
We have thus proved the first and second part of the Main Theorem.

\section{Identifying the kernel and cokernel}\label{sec:ker-coker}
To ease the notation within this section and the next we make the
following convention.
\begin{convention*}
  In this section and the next, $Z$ and $B$ will denote the reduction
  of the usual cycle and bond spaces of $G$ by the prime $2$. Thus $Z
  = \ker(\partial : (\ZZ/2)E \to (\ZZ/2)V)$ and $B = \im(\partial^t :
  (\ZZ/2)V \to (\ZZ/2)E)$. The same notation is used for $Z_{\pm}$ and
  $B_{\pm}$. We will also write $f$ and $f^t$ for the reduction of
  these $\ZZ$-linear maps by $2$.
\end{convention*}
Since the kernel and cokernel of $f^*$ are $2$-torsion their structure
is intimately related to the reduction of their critical groups by
$2$.  The $2$-bicycle space (hereafter the \textbf{bicycle space}) of
$G$ is $Z \cap B \subset (\ZZ/2)E$, which by
Proposition~\ref{prop:2bicycles} is naturally isomorphic to
$K(G)/2K(G)$.

An element $h$ of $(\ZZ/2)E$ can be identified with a subgraph of $H
\subset G$ via its support. Note that this subgraph does not come with
an orientation. An element of $Z \cap B$ corresponds to a graph $H$
satisfying the properties:
\begin{enumerate}
\item $H$ is the set of edges connecting a bipartition of $V$,
\item Every vertex of $G$ is incident to an even number of edges of
   $H$.
\end{enumerate}

The following algebraic result is proved in a more general context as
Proposition~\ref{prop:2bicycles}, and it follows since the kernel and
cokernel of $f^*$ are known to be $2$-torsion.
\begin{proposition}\label{prop:kercoker}
  There are group isomorphisms,
  \begin{align*}
  \coker(f^*) &\approx \ker( f^t : Z \cap B \to (Z_+ \oplus Z_-) \cap
  (B_+ \oplus B_-)),\\
  \ker(f^*) &\approx \ker( f : (Z_+ \oplus Z_-) \cap
  (B_+ \oplus B_-) \to Z \cap B).
  \end{align*}
\end{proposition}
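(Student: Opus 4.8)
The plan is to deduce this from the stated Proposition~\ref{prop:2bicycles}, which (over a prime $p$, here $p=2$) identifies $K(G)/pK(G)$ with the $p$-bicycle space $Z \cap B$, and, more to the point, identifies $p$-torsion phenomena in maps of critical groups with the corresponding phenomena in bicycle spaces. The key structural input already established is that $\ker(f^*)$ and $\coker(f^*)$ are $2$-torsion. For a finitely generated abelian group $A$, being $2$-torsion means $A = A[2] = A/2A$; so both the kernel and cokernel of $f^*$ can be recovered from the mod-$2$ reduction of the critical groups together with the mod-$2$ reduction of $f$ and $f^t$.

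First I would set up the mod-$2$ picture dictated by the Convention: $Z$, $B$ now denote the mod-$2$ cycle and bond spaces, $Z_\pm$, $B_\pm$ likewise, and $f$, $f^t$ their mod-$2$ reductions. By Proposition~\ref{prop:2bicycles} we have canonical isomorphisms $K(G)/2K(G) \cong Z \cap B$ and $(K(G_+) \oplus K(G_-))/2(\cdots) \cong (Z_+ \oplus Z_-) \cap (B_+ \oplus B_-)$, compatible with the maps induced by $f$ and $f^t$. Under these identifications, the map $f^* \otimes \ZZ/2 : (K(G_+)\oplus K(G_-))/2 \to K(G)/2$ becomes exactly the restriction of the mod-$2$ map $f$ to the bicycle spaces, i.e. $f : (Z_+ \oplus Z_-) \cap (B_+ \oplus B_-) \to Z \cap B$; similarly $(f^t)^* \otimes \ZZ/2$ becomes the restriction of $f^t$ to $Z \cap B \to (Z_+ \oplus Z_-)\cap (B_+\oplus B_-)$.

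Next I would invoke the general principle that when a homomorphism $g : A \to C$ of finitely generated abelian groups has $2$-torsion kernel and cokernel, the reduction $g \otimes \ZZ/2 : A/2A \to C/2C$ satisfies $\ker(g\otimes\ZZ/2) \cong \ker(g)$ and $\coker(g\otimes\ZZ/2) \cong \coker(g)$. (This is the statement that the long exact sequence in $\mathrm{Tor}$ for $- \otimes \ZZ/2$ collapses: the snake-lemma connecting map $C/2C \to \ker(g)$ — coming from $0 \to A \xrightarrow{g} C \to \coker(g) \to 0$ tensored with $\ZZ/2$ — is an iso when $\ker g$, $\coker g$ are both killed by $2$; one checks this by a direct diagram chase or by splitting off the $2$-torsion part.) Applying this with $g = f^*$ gives $\ker(f^*) \cong \ker(f^* \otimes \ZZ/2) \cong \ker(f|_{\text{bicycles}})$ and $\coker(f^*) \cong \coker(f^*\otimes\ZZ/2)$. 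For the cokernel I would then use Proposition~\ref{prop:pontryagin} (duality between $f^*$ and $(f^t)^*$, already used in the previous proof) to rewrite $\coker(f^*)$ as (the dual of) $\ker((f^t)^*)$, hence as $\ker(f^t|_{\text{bicycles}})$, which is the claimed description. Since we are over $\ZZ/2$ the dualization introduces no extra decoration.

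The main obstacle is bookkeeping rather than a genuine difficulty: namely verifying that the identifications of Proposition~\ref{prop:2bicycles} are natural enough that $f^* \otimes \ZZ/2$ really corresponds to the honest restriction of the mod-$2$ $f$ to the intersection $(Z_+\oplus Z_-)\cap(B_+\oplus B_-)$ — one must check that $f$ (mod $2$) carries this intersection into $Z \cap B$ (it carries $Z_+ \oplus Z_-$ into $Z$ and $B_+ \oplus B_-$ into $B$ by the earlier Proposition, and these statements survive reduction mod $2$), and that the induced map on $K(\cdot)/2K(\cdot)$ agrees with it under the canonical isomorphism. Granting the appendix's functoriality (Proposition~\ref{prop:2bicycles}, Proposition~\ref{prop:pontryagin}), the rest is the short homological-algebra lemma about $2$-torsion kernels and cokernels, and the two displayed isomorphisms follow immediately.
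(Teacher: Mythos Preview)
Your overall strategy---reduce mod $2$ and invoke the functorial identification of $K/2K$ with the bicycle space (Proposition~\ref{prop:2bicycles})---matches the paper's approach. However, there is a genuine gap in your ``general principle'': the claim that if $g:A\to C$ has $2$-torsion kernel and cokernel then $\ker(g\otimes\ZZ/2)\cong\ker(g)$ is \emph{false}. Take $g:\ZZ/2\to\ZZ/4$, $1\mapsto 2$; here $\ker g=0$ and $\coker g\cong\ZZ/2$ are both $2$-torsion, yet $g\otimes\ZZ/2$ is the zero map $\ZZ/2\to\ZZ/2$, so $\ker(g\otimes\ZZ/2)\cong\ZZ/2\neq 0$. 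Right-exactness of $-\otimes\ZZ/2$ does give $\coker(g\otimes\ZZ/2)\cong\coker(g)$ whenever the latter is $2$-torsion, but no analogous statement holds for the kernel without further input; your sketch of a Tor/snake argument cannot be completed as stated.

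The paper's (implicit) argument---its one-line proof really points to the unnumbered proposition preceding Proposition~\ref{prop:2bicycles}, together with Proposition~\ref{prop:pontryagin}---avoids this by routing \emph{both} identifications through duality and the left-exactness of $\Hom(-,\ZZ/2)$. Applying $\Hom(-,\ZZ/2)$ to $K\xrightarrow{f^*}K'\to\coker(f^*)\to 0$ gives $\coker(f^*)\cong\ker\bigl((f^t)^*:K'/2K'\to K/2K\bigr)$, and Proposition~\ref{prop:2bicycles} then turns this into $\ker(f^t|_{Z\cap B})$. For the other isomorphism one first uses Proposition~\ref{prop:pontryagin} to write $\ker(f^*)\cong\coker((f^t)^*)$, and then repeats the same $\Hom(-,\ZZ/2)$ argument with the roles of $f$ and $f^t$ exchanged, landing on $\ker(f|_{\text{bicycles}})$. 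In short, duality is not an optional clean-up step for the cokernel as you present it; it is precisely what substitutes for your faulty kernel lemma.
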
 

We are now in a position to identify $\coker(f^*)$.
\begin{proposition}\label{prop:coker}
  The kernel of $f^t:(\ZZ/2) E \to (\ZZ/2)E_+ \oplus (\ZZ/2)E_-$ has a
  basis given by the $\phi$-fixed elements $e + \phi(e)$. The kernel
  of $f^t$ restricted to $Z \cap B$ consists of the $\phi$-fixed
  bicycles of $G$.
\end{proposition}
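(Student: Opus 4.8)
The plan is to prove the two assertions separately. First I would compute $\ker(f^t)$ on all of $(\ZZ/2)E$ from the explicit mod-$2$ formula for $f^t$; then I would intersect with $Z\cap B$, the only real point being that the support of a $\phi$-fixed bicycle automatically misses $E^\phi$.

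For the first assertion, recall that over $\ZZ/2$ we have $f^t(e)=(e'+e'',0)$ for $e\in E^\phi$ (where $e',e''$ are the two subdivided edges of $G_+$ coming from $e$), $f^t(e)=(e,\phi(e))$ for $e\in E_L$, and $f^t(e)=(\phi(e),e)$ for $e\in E_R$. In particular $f^t(e)=f^t(\phi(e))$ whenever $e\notin E^\phi$, so each $e+\phi(e)$ with $e\in E_L$ lies in $\ker(f^t)$, and these $|E_L|$ elements have pairwise disjoint supports, hence are linearly independent. For the reverse inclusion I would write $h=\sum_{e\in E}c_e e$ and expand $f^t(h)$ in the standard bases of $(\ZZ/2)E_+$ and $(\ZZ/2)E_-$. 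The $(\ZZ/2)E_-$-component equals $\sum_{e\in E_L}(c_e+c_{\phi(e)})\phi(e)$, and since the $\phi(e)$ for $e\in E_L$ form the basis of $(\ZZ/2)E_-$ this forces $c_e=c_{\phi(e)}$ for all $e$; granting this, the $(\ZZ/2)E_+$-component collapses to $\sum_{e\in E^\phi}c_e(e'+e'')$, which forces $c_e=0$ for all $e\in E^\phi$ because the $e',e''$ are distinct basis vectors of $(\ZZ/2)E_+$. Hence $h=\sum_{e\in E_L}c_e(e+\phi(e))$, and the $\phi$-fixed elements $e+\phi(e)$ form a basis of $\ker(f^t)$.

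For the second assertion, the computation above says that $\ker(f^t)$ is exactly the set of $\phi$-fixed elements of $(\ZZ/2)E$ whose support avoids $E^\phi$, so after intersecting with $Z\cap B$ it is enough to show that a $\phi$-fixed bicycle already avoids $E^\phi$. Let $h\in Z\cap B$ with $\phi(h)=h$, and let $H\subseteq G$ be the subgraph with edge set $\operatorname{supp}(h)$; then $\phi$ maps $H$ onto itself. For a vertex $v\in V^\phi$, the action of $\phi$ permutes the edges of $H$ at $v$, its fixed points being exactly the $\phi$-fixed edges of $H$ at $v$; since $h\in Z$ the degree of $H$ at $v$ is even, so the number of $\phi$-fixed edges of $H$ at $v$ is even. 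As every $\phi$-fixed edge has $\phi$-fixed endpoints, no vertex outside $V^\phi$ is incident to an edge of $E^\phi$; thus $H\cap E^\phi$ has even degree at every vertex of $G$, i.e.\ its characteristic vector lies in $Z$. Now by condition (2) in the definition of reflective symmetry the edges of $E^\phi$ are drawn inside the axis $\ell$, and non-degeneracy of the drawing makes their interiors pairwise disjoint, so the subgraph on edge set $E^\phi$ embeds in a line and is hence a disjoint union of paths, a forest. A forest has trivial cycle space, so $H\cap E^\phi=\emptyset$. Together with the first assertion this identifies $\ker(f^t)|_{Z\cap B}$ with the space of $\phi$-fixed bicycles of $G$ (which, by Proposition~\ref{prop:kercoker}, is $\coker(f^*)$).

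The main obstacle I anticipate is this last point---that a $\phi$-fixed bicycle cannot meet $E^\phi$. The parity argument at the vertices of $V^\phi$ reduces it to showing that $E^\phi$ supports no nonzero cycle, and this in turn rests on the geometric fact that the $\phi$-fixed edges, lying inside the axis, form a forest. By contrast the determination of $\ker(f^t)$ in the first assertion is routine bookkeeping once the mod-$2$ form of $f^t$ is written out.
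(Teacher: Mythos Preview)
Your argument is correct. For the first assertion your coefficient-chase is equivalent to what the paper does: the paper changes basis on both sides so that the matrix of $f^t$ becomes diagonal, whereas you read off the same conclusion coordinate-by-coordinate in the standard bases. Either way one finds $\ker(f^t)=\operatorname{span}\{e+\phi(e):e\in E_L\}$.

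Where your write-up differs is in the second assertion. The paper simply declares ``it is sufficient to prove the first claim'' and moves on, implicitly treating ``$\phi$-fixed bicycle'' as synonymous with ``bicycle lying in $\ker(f^t)$''. You take the more natural reading---$h\in Z\cap B$ with $\phi(h)=h$---and then actually verify the missing step, namely that such an $h$ cannot be supported on $E^\phi$. Your parity argument at the $\phi$-fixed vertices (the $H$-degree at $v\in V^\phi$ is even, the non-fixed edges at $v$ pair off under $\phi$, hence the number of $E^\phi$-edges of $H$ at $v$ is even) together with the observation that the $\phi$-fixed edges lie on the axis and so $G^\phi$ is a forest, is exactly what is needed to close this gap. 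So your treatment of the second assertion is strictly more complete than the paper's; the paper's one-line reduction is only justified once one knows the fact you prove.
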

\begin{proof}
  It is sufficient to prove the first claim. A basis for $(\ZZ/2)E$ is
  given by $\{ e + \phi(e) : e \in E_L\} \cup E_L \cup
  E^\phi$. Likewise, a basis of $(\ZZ/2)E_+ \oplus (\ZZ/2)E_-$ is
  given by $\{(e,\phi(e)): e \in E_L\} \cup E_+ \cup E_L$.

  The matrix of $f^t$ becomes diagonal in this basis, and it is clear
  that the kernel of $f^t$ has the stated form.
\end{proof}

To identify the cokernel of $(f^t)^*$ we need another
involution. Define $$\psi:(\ZZ/2)(E_+ \cup E_-) \to (\ZZ/2)(E_+ \cup
E_-)$$ as follows. If $e \in E_+$ is obtained by subdividing an edge
of $E$ then set $\psi(e)$ equal to the other edge obtained in this
way. If $e \in E_+$ arises from an edge that is not $\phi$-fixed, then
we define $\psi(e) := \phi(e) \in E_-$ and $\psi(\phi(e)) = e$. This
map is \textit{not} determined by a graph automorphism.
\begin{proposition}\label{prop:ker}
  The kernel of $f: (\ZZ/2)E_+ \oplus (\ZZ/2)E_- \to (\ZZ/2)E$
  consists of the elements $\psi$-fixed elements.  The kernel of $f$
  restricted to $(Z_+ \cap Z_-) \cap (B_+ \cap B_-)$ consists of the
  $\psi$-fixed bicycles of $G_+ \cup G_-$.
\end{proposition}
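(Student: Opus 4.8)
The plan is to follow the same template as the proof of Proposition~\ref{prop:coker}: exhibit a basis of $(\ZZ/2)E_+ \oplus (\ZZ/2)E_-$ with respect to which the matrix of $f$ is diagonal, so that $\ker f$ can simply be read off, and then intersect with the bicycle space to get the restricted statement.

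First I would fix notation for the basis vectors of the domain. Since $\phi$ maps $E_L$ bijectively onto $E_R$ (it fixes $E^\phi$ pointwise), we may identify $E_R$ with $\phi(E_L)$; then $(\ZZ/2)E_+ \oplus (\ZZ/2)E_-$ has standard basis given by the edges $e \in E_L$ regarded as edges of $G_+$, the edges $\phi(e)$ ($e \in E_L$) regarded as edges of $G_-$, and, for each $e' \in E^\phi$, the two halves $e'_1, e'_2 \in E_+$ of the subdivided edge. Reducing the defining formulas for $f$ modulo $2$ gives $f(e'_i,0) = e'$, $f(e,0) = e + \phi(e)$ for $e \in E_L$, and $f(0,\phi(e)) = \phi(e) + e$ for $e \in E_L$ (the minus sign in $e - \phi(e)$ is irrelevant mod $2$).

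Next I would change basis by the elementary (hence invertible) substitution that replaces $e'_2$ with $e'_1 + e'_2$ and replaces the basis vector $\phi(e) \in E_-$ with $(e,0)+(0,\phi(e))$, keeping all other basis vectors unchanged. With respect to this new basis, $f$ annihilates each $e'_1 + e'_2$ ($e'\in E^\phi$) and each $(e,0)+(0,\phi(e))$ ($e\in E_L$), and sends the surviving basis vectors $e'_1$ and $e$ to $e'$ and $e+\phi(e)$ respectively. The set $\{\, e' : e'\in E^\phi\,\} \cup \{\, e+\phi(e) : e\in E_L\,\}$ is linearly independent in $(\ZZ/2)E$: the $\phi$-fixed edges $e'$ occupy coordinates disjoint from the rest, and the vectors $e+\phi(e)$ with $e\in E_L$ are manifestly independent. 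Hence $f$ is injective on the span of the surviving basis vectors, so $\ker f$ is exactly the span of $\{\, e'_1 + e'_2 : e'\in E^\phi\,\} \cup \{\, (e,0)+(0,\phi(e)) : e\in E_L\,\}$. But these vectors are precisely the $\psi$-orbit sums on the standard basis — the $\psi$-orbits being the pairs $\{e'_1,e'_2\}$ ($e'\in E^\phi$) and $\{e,\phi(e)\}$ ($e\in E_L$) — so their span is the space of $\psi$-fixed elements, proving the first claim. For the second claim I would set $W = (Z_+\oplus Z_-)\cap(B_+\oplus B_-)$, the bicycle space of the disjoint union $G_+\cup G_-$ (whose cycle and bond spaces are $Z_+\oplus Z_-$ and $B_+\oplus B_-$), cf.\ Proposition~\ref{prop:kercoker}; since $\ker(f|_W) = \ker f \cap W$, the first claim immediately gives $\ker(f|_W) = \{\psi\text{-fixed elements}\}\cap W$, i.e.\ the $\psi$-fixed bicycles of $G_+\cup G_-$, and together with Proposition~\ref{prop:kercoker} this identifies $\ker(f^*)$ with that group.

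I do not expect a genuine obstacle here: the argument is bookkeeping. The only place to be careful is the choice of change of basis — one must pair $e'_2$ with $e'_1$ inside $G_+$, but pair $\phi(e) \in E_-$ with $e \in E_+$ across the two summands — together with checking that the images $e'$ and $e+\phi(e)$ of the surviving basis vectors really are linearly independent; getting that right is exactly what makes the matrix of $f$ diagonal and pins the kernel down to the $\psi$-fixed subspace.
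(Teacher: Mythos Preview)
Your argument is correct and is essentially the same as the paper's: the paper simply says to compute the matrix of $f$ in the bases introduced in the proof of Proposition~\ref{prop:coker} and read off the kernel, and your change of basis (replacing $\phi(e)\in E_-$ by $(e,\phi(e))$ and $e'_2$ by $e'_1+e'_2$) is exactly that basis change made explicit. The paper leaves the further substitution $e'_2\mapsto e'_1+e'_2$ to the reader's ``inspection'', whereas you carry it out, but the route is identical.
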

\begin{proof}
  Compute the matrix of $f$ in term of the basis used in the proof of
  Proposition~\ref{prop:coker}. The first statement follows from
  inspection of the matrix representing $f$ and the second follows
  from the first.
\end{proof}

Propositions \ref{prop:kercoker}, \ref{prop:coker} and \ref{prop:ker}
give a complete combinatorial description of $\coker(f^*)$ and
$\ker(f^*)$. We illustrate them with an example.

\begin{example}
  We continue with our running example, starting with $\coker(f^*)$. A
  $\phi$-fixed bicycle in $G$ is indicated by the shaded edges below.
  \tikzstyle{edge} = [draw,thick,-] \tikzstyle{selected edge} =
  [draw,line width=5pt,-,red!50]
  $$
  \begin{tikzpicture}
    \foreach \pos/\name in { {(0,-1)/b}, {(0,1)/c},{(-1,0)/a}, {(1,0)/d}}
    \node (\name) at \pos [vertex] {};
    \path[selected edge] (a) -- (b);
    \path[selected edge] (a) -- (c);
    \path[selected edge] (d) -- (c);
    \path[selected edge] (d) -- (b);
    \foreach \source/ \dest in { a/b, a/c, d/c,d/b,c/b}
    \path[edge] (\source) --  (\dest);
  \end{tikzpicture}
  $$
  It follows that $\coker(f^*)\approx \ZZ/2$. For the kernel of $f^*$,
  we investigate bicycles in $G_+ \cup G_-$. The graph $G_+ \cup G_-$
  itself is a $\psi$-fixed bicycle.
  $$
    \begin{tikzpicture}
    \node (bc) at (0,0) [subvertex] {};
    \foreach \pos/\name in { {(0,-1)/b}, {(0,1)/c},
      {(-1,0)/a}}
    \node (\name) at \pos [vertex] {};
    \foreach \source/ \dest in { a/b, a/c, c/bc,bc/b}
    \path[selected edge] (\source) --  (\dest);
    \foreach \source/ \dest in { a/b, a/c, c/bc,bc/b}
    \path[edge] (\source) --  (\dest);
  \end{tikzpicture}
  \qquad\qquad
  \begin{tikzpicture}
    \node at (0,1) {}; \node at (0,-1) {};
    \foreach \pos/\name in { {(0,0)/bc},
      {(1,0)/d}}
    \node (\name) at \pos [vertex] {};
    \draw [selected edge] (d) to [bend right] (bc);
    \draw [selected edge] (d) to [bend left] (bc);
    \draw [edge] (d) to [bend right] (bc);
    \draw [edge] (d) to [bend left] (bc);
  \end{tikzpicture}
  $$
  Although both $G_+ \subset G_+ \cup G_-$ and $G_- \subset G_+ \cup
  G_-$ are bicycles, they are not $\psi$-fixed. It follows that
  $\ker(f^*)\approx \ZZ/2$. The $\ker$-$\coker$ exact sequence for
  $f^*$ takes the form,
  \[
  0 \to \ZZ/2 \to \ZZ/4 \oplus \ZZ/2 \stackrel{f^*}{\to} \ZZ/8 \to \ZZ/2
  \to 0.
  \]
\end{example}
\begin{example}
  Let $G$ be a $(2n)$-cycle with the obvious reflective symmetry. Then
  $G_+$ is a path on $n+1$ vertices and $G_-$ is an $n$-cycle. We see
  that $G$ is a $\phi$-fixed bicycle. Since $G_+$ is a path it has no
  (non-empty) bicycles, and hence there are no $\psi$-fixed bicycles.

  The $\ker$-$\coker$ exact sequence for the map $f^*$ takes the form
  \[
  0 \to \ZZ/n \to \ZZ/(2n) \to \ZZ/2 \to 0,
  \]
  which is never split if $n$ is even.
\end{example}
We close this section with alternate presentations of $\ker(f^*)$ and
$\coker(f^*)$.

\begin{proposition}\label{prop:alternateKerCoker}
  There are isomorphisms,
  \begin{align*}
    \ker(f^*).&\approx \frac{((\ZZ/2)(E_+ \cup E_-))^\psi}{(Z_+ \oplus Z_-)^\psi + (B_+
    \oplus B_-)^\psi}, \\
  \coker(f^*) &\approx \frac{((\ZZ/2)E)^\phi}{Z^\phi + B^\phi}.
  \end{align*}
\end{proposition}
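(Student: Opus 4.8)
The plan is to derive both isomorphisms from the descriptions of $\ker(f^*)$ and $\coker(f^*)$ already obtained in Propositions~\ref{prop:kercoker}, \ref{prop:coker}, and \ref{prop:ker}, by recognizing each of the relevant ``$\phi$-fixed bicycle'' and ``$\psi$-fixed bicycle'' spaces as the degree-zero homology of a short complex of fixed subspaces. I will treat the $\coker$ statement first, since it is the cleaner of the two. By Propositions~\ref{prop:kercoker} and \ref{prop:coker} we have $\coker(f^*)\approx (Z\cap B)^\phi$, the space of $\phi$-fixed bicycles of $G$. The claim is then that $(Z\cap B)^\phi \approx ((\ZZ/2)E)^\phi / (Z^\phi + B^\phi)$. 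First I would observe that over $\ZZ/2$ the space $(\ZZ/2)E$ decomposes orthogonally (for the standard form) as $Z \oplus B^{\perp\perp}$-type data is not quite available, so instead I will use the more robust fact that $(\ZZ/2)E = Z + B$ always holds when... actually this need not hold; rather $Z$ and $B$ are orthogonal complements only after tensoring with a field where $\dim Z + \dim B = |E|$, which does hold over $\ZZ/2$. So $(\ZZ/2)E = Z \oplus B$ is false in general (they can intersect), but $\dim Z + \dim B = |E|$ holds, and $Z\cap B$ is exactly the obstruction to the sum being direct. The right framework is the self-dual two-step complex $0 \to Z\cap B \to Z \oplus B \to (\ZZ/2)E \to \coker \to 0$ where the middle map is $(z,b)\mapsto z+b$; its image is $Z+B$ and its cokernel $(\ZZ/2)E/(Z+B)$ is dual to $Z\cap B$, so in fact $Z\cap B$, $(\ZZ/2)E/(Z+B)$, and $K(G)/2K(G)$ are all isomorphic (as already used via Proposition~\ref{prop:2bicycles}).

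With that in hand, the key step is to take $\phi$-fixed points. Since $\phi$ acts on each of $Z$, $B$, $Z\cap B$, and $(\ZZ/2)E$ compatibly, applying the fixed-point functor $(-)^\phi$ to the identity $K(G)/2K(G)\approx (\ZZ/2)E/(Z+B)$ and to $K(G)/2K(G)\approx Z\cap B$ gives $((\ZZ/2)E)^\phi / (Z+B)^\phi$ on one side and $(Z\cap B)^\phi$ on the other — but taking fixed points is only left exact, so I must check that $(Z+B)^\phi = Z^\phi + B^\phi$ and that the natural surjection $((\ZZ/2)E)^\phi \twoheadrightarrow ((\ZZ/2)E/(Z+B))^\phi$ from left-exactness is still surjective here. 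Over $\ZZ/2$ the involution $\phi$ generates a group of order $2$, and $H^1(\ZZ/2; M)$ need not vanish, so these are genuine points to verify and not formalities. The cleanest route is concrete: using the basis of $(\ZZ/2)E$ from the proof of Proposition~\ref{prop:coker}, namely $\{e+\phi(e): e\in E_L\}\cup E_L \cup E^\phi$, the fixed subspace $((\ZZ/2)E)^\phi$ has basis $\{e+\phi(e):e\in E_L\}\cup E^\phi$, and one checks directly that the projection onto fixed parts carries $Z$ onto $Z^\phi$ and $B$ onto $B^\phi$ — equivalently, that a $\phi$-fixed element lying in $Z+B$ can be written as a $\phi$-fixed cycle plus a $\phi$-fixed bond. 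This last assertion is the technical heart: given $h = z+b$ with $\phi h = h$, average to get $h = \tfrac{?}{}$ — but we cannot divide by $2$ over $\ZZ/2$, so instead note $0 = \phi h - h = (\phi z - z) + (\phi b - b)$ with $\phi z - z \in Z$ and $\phi b - b \in B$; hence $\phi z - z = \phi b - b \in Z\cap B$, and one massages this using that $Z\cap B$ itself is $\phi$-stable to produce the desired fixed decomposition. I expect this averaging-substitute argument to be the main obstacle.

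For the $\ker(f^*)$ statement the strategy is identical with $G$ replaced by the disjoint union $G_+ \sqcup G_-$ (whose cycle space is $Z_+\oplus Z_-$ and bond space $B_+\oplus B_-$) and with $\phi$ replaced by the involution $\psi$ of Proposition~\ref{prop:ker}. By Propositions~\ref{prop:kercoker} and \ref{prop:ker}, $\ker(f^*)\approx ((Z_+\oplus Z_-)\cap(B_+\oplus B_-))^\psi$, the $\psi$-fixed bicycles of $G_+\cup G_-$; and the bicycle space of $G_+\sqcup G_-$ is likewise isomorphic to $((\ZZ/2)(E_+\cup E_-))/((Z_+\oplus Z_-)+(B_+\oplus B_-))$. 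Taking $\psi$-fixed points and checking — by the same basis computation, now using the basis $\{(e,\phi(e)):e\in E_L\}\cup E_+\cup E_L$ from the proof of Proposition~\ref{prop:coker} and its $\psi$-fixed subspace — that $((Z_+\oplus Z_-)+(B_+\oplus B_-))^\psi = (Z_+\oplus Z_-)^\psi + (B_+\oplus B_-)^\psi$ and that the relevant fixed-point projection stays surjective, yields the stated presentation. The one extra subtlety is that $\psi$ is not induced by a graph automorphism, so I cannot appeal to any functoriality of $K(-)$ under $\psi$; but the argument above only uses that $\psi$ is a $\ZZ/2$-linear involution preserving $Z_+\oplus Z_-$ and $B_+\oplus B_-$, which is exactly what Proposition~\ref{prop:ker} (together with the fact that $e-\phi(e)\leftrightarrow \phi(e)-e$ and $e'-e''\leftrightarrow e''-e'$ are bond-to-bond and cycle-to-cycle) provides. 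So both isomorphisms follow once the single lemma — fixed points of a sum of two $\ZZ/2$-stable subspaces under a $\ZZ/2$-linear involution equal the sum of the fixed points, provided one of the natural connecting maps vanishes — is established in the cases at hand, and I would state and prove that lemma once and apply it twice.
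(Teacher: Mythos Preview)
Your plan and the paper's proof follow the same route: identify $\coker(f^*)$ with $(K(G)/2K(G))^\phi$ via Propositions~\ref{prop:kercoker} and \ref{prop:coker}, use the equivariant isomorphism $K(G)/2K(G)\cong (\ZZ/2)E/(Z+B)$, and then pass to $\phi$-fixed points. You are right that this last step is the entire content, and you correctly isolate the two things that must be checked: surjectivity of $((\ZZ/2)E)^\phi \to ((\ZZ/2)E/(Z+B))^\phi$, and the equality $(Z+B)^\phi = Z^\phi + B^\phi$. The paper simply asserts the surjection; you propose an ``averaging-substitute'' lemma and honestly flag it as the main obstacle.

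The obstacle is real and cannot be overcome: the surjectivity fails already for the $4$-cycle $C_4$ with the reflection fixing two opposite vertices (this is the paper's own Example with $n=2$). Label the edges $a,b,c,d$ cyclically with $\phi$ swapping $a\leftrightarrow b$ and $c\leftrightarrow d$. Then $((\ZZ/2)E)^\phi=\langle a+b,\,c+d\rangle$, while $B^\phi$ is spanned by the bonds at the two fixed vertices, namely $a+b$ and $c+d$ as well; since the full cycle $a+b+c+d$ lies in $B$, one gets $Z^\phi+B^\phi=B^\phi=((\ZZ/2)E)^\phi$, so the proposed quotient is \emph{zero}. But $\coker(f^*)\cong\ZZ/2$ here (the whole $4$-cycle is a $\phi$-fixed bicycle, and $K(C_4)=\ZZ/4$ surjects onto $\ZZ/2$). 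Concretely, the quotient map $((\ZZ/2)E)^\phi\to (K/2K)^\phi$ sends both $a+b$ and $c+d$ to $0$, so it is the zero map, not a surjection. The same failure occurs on the $\ker$ side: $((\ZZ/2)(E_+\cup E_-))^\psi$ modulo $(Z_+\oplus Z_-)^\psi+(B_+\oplus B_-)^\psi$ is one-dimensional, while $\ker(f^*)=0$.

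So your instinct that ``this is the main obstacle'' was exactly right, but the lemma you hoped to prove is false, and the proposition as stated does not hold in general. The paper's brief argument shares the same gap. Note, however, that the description of $\ker(f^*)$ and $\coker(f^*)$ as $\psi$- and $\phi$-fixed \emph{bicycles} (Propositions~\ref{prop:kercoker}--\ref{prop:ker}) is unaffected; it is only the passage to the quotient presentation that breaks down.
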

\begin{proof}
  The proofs amount to the fact that $\ker(f^*)$ and $\coker(f^*)$ are
  succinctly described as the $\psi$ and $\phi$ fixed elements of
  $K(G_+)/2K(G_+) \oplus K(G_-)/2K(G_-)$ and $K(G)/2K(G)$. This is
  true because the isomorphisms relating the various presentations of
  the critical groups in Appendix~\ref{sec:bicycles} are equivariant
  with respect to $\psi$ and $\phi$.

  We then see that $((\ZZ/2)(E_+ \cup E_-))^\psi$ and
  $((\ZZ/2)E)^\phi$ surject onto these critical groups. The kernels of
  these maps are evident.
\end{proof}


\section{Relating $\ker(f^*)$ and $\coker(f^*)$}\label{sec:relate}
Our final goal is to relate the orders of $\ker(f^*)$ and
$\coker(f^*)$ in a concrete fashion. An easy and immediate result is
that $|\coker(f^*)| / |\ker(f^*)|$ is a positive integer power of $2$.
\begin{proposition}\label{prop:kerIntoCoker}
  There is an injective map
  \[
  \ker(f^*)  \to \coker(f^*).
  \]
\end{proposition}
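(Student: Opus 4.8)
I want to exhibit an explicit injection $\ker(f^*) \hookrightarrow \coker(f^*)$ using the two combinatorial models supplied by Propositions~\ref{prop:coker} and~\ref{prop:ker}: $\ker(f^*)$ is the space of $\psi$-fixed bicycles of $G_+ \cup G_-$, and $\coker(f^*)$ is the space of $\phi$-fixed bicycles of $G$. The natural candidate for the map is (the restriction of) $f$ itself, or equivalently a symmetrized ``doubling'' construction: given a $\psi$-fixed bicycle $h = (h_+, h_-) \in (Z_+ \oplus Z_-) \cap (B_+ \oplus B_-)$ with $\psi(h) = h$, I would like to assign to it a $\phi$-fixed element of $Z \cap B \subset (\ZZ/2)E$. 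Since $f$ already carries $(Z_+\oplus Z_-)\cap(B_+\oplus B_-)$ into $Z\cap B$, the first thing to check is that the image lands in the $\phi$-fixed part; this is immediate from the defining formulas $f(e,0) = e+\phi(e)$, $f(e',0) = e'$ for a subdivided edge, and $f(0,e) = e - \phi(e) = e + \phi(e)$ over $\ZZ/2$, each of which is manifestly $\phi$-invariant.

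**Key steps, in order.** First I would pin down the target: by Proposition~\ref{prop:kercoker}, $\ker(f^*) \approx \ker\bigl(f\colon (Z_+\oplus Z_-)\cap(B_+\oplus B_-) \to Z\cap B\bigr)$, so an element of $\ker(f^*)$ is precisely a $\psi$-fixed bicycle $h$ of $G_+\cup G_-$ that additionally satisfies $f(h) = 0$. This looks like it kills the naive idea of using $f$ — but the point is that over $\ZZ/2$ the map $f$ on such an element is \emph{not} the right map to use; instead I would use $f^t \circ (\text{something})$, or more cleanly, observe that $\ker(f^*)$ sits inside $K(G_+)\oplus K(G_-)$ and push it forward by $f^*$ only after first lifting. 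The cleanest route: use the alternate presentations of Proposition~\ref{prop:alternateKerCoker}. There $\ker(f^*) \approx ((\ZZ/2)(E_+\cup E_-))^\psi / \bigl((Z_+\oplus Z_-)^\psi + (B_+\oplus B_-)^\psi\bigr)$ and $\coker(f^*) \approx ((\ZZ/2)E)^\phi/(Z^\phi + B^\phi)$. Now $f^t$ maps $((\ZZ/2)E)^\phi$ into $((\ZZ/2)(E_+\cup E_-))^\psi$ — one checks from the formulas $f^t(e) = (e'+e'',0)$ for $e \in E^\phi$ and $f^t(e) = (e,\phi(e)) = (e,\psi(e))$ for $e \in E_L$ that the image is $\psi$-symmetric — and it carries $Z^\phi$ into $(Z_+\oplus Z_-)^\psi$ and $B^\phi$ into $(B_+\oplus B_-)^\psi$. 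So $f^t$ induces a homomorphism $\coker(f^*) \to \ker(f^*)$ in these presentations. Running the same argument with $f$ in place of $f^t$ (and using that $f$ maps the $\psi$-fixed numerator into the $\phi$-fixed numerator, as checked above) gives a homomorphism $\ker(f^*) \to \coker(f^*)$. It remains to check this latter map is injective.

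**Injectivity.** For injectivity I would compute $f^t \circ f$ (or $f \circ f^t$, depending on which composite is cheaper) on the relevant fixed subspaces and show it is an isomorphism there — indeed the earlier computations already show $f(e,-\phi(e)) = 2e$ and $f^t(e) \equiv 2e' \bmod B_+$, which over $\ZZ$ say $f\circ f^t$ and $f^t\circ f$ are ``multiplication by $2$'' up to bond-space corrections. The precise statement I need is that on $((\ZZ/2)(E_+\cup E_-))^\psi$ modulo $(B_+\oplus B_-)^\psi$, the composite $f^t\circ f$ agrees with an \emph{injective} self-map — concretely, I expect $f^t f (e, \psi(e)) = f^t(e + \phi(e)) = (e,\psi(e)) + (\phi(e), e)$, which equals $(e,\psi(e)) + \psi(e,\psi(e))$, and since we are on the $\psi$-fixed part this is $2(e,\psi(e)) \equiv 0$ — so over $\ZZ/2$ the composite is \emph{zero}, not injective. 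This means the bare composite argument fails at the prime $2$ and I must instead argue injectivity of the induced map $\ker(f^*)\to\coker(f^*)$ directly from the combinatorial models: a $\psi$-fixed bicycle $h$ of $G_+\cup G_-$ maps to $f(h)$, a $\phi$-fixed bicycle of $G$, and I must show $f(h) \in Z^\phi + B^\phi$ forces $h \in (Z_+\oplus Z_-)^\psi + (B_+\oplus B_-)^\psi$.

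**The main obstacle.** That last implication is where the real content lies, and it is the step I expect to be hardest. The cleanest handle is exactness: $\ker(f^*)$ injects into $K(G_+)\oplus K(G_-)$ and $\coker(f^*) = K(G)/\im(f^*)$, so the composite $\ker(f^*) \to K(G_+)\oplus K(G_-) \xrightarrow{f^*} K(G) \to \coker(f^*)$ is zero for the stupid reason that $f^*$ lands in $\im(f^*)$. So using $f^*$ literally cannot work; the injection must come from a genuinely different map — the ``transfer'' or symmetrization map that on bicycle models sends a $\psi$-fixed bicycle $(h_+,h_-)$ to the $\phi$-fixed bicycle obtained by \emph{reflecting one half across $\ell$ and gluing}, i.e.\ to $h_+ \cup \phi(h_+)$ together with the subdivided-edge contributions, which is exactly $f(h_+, 0)$ with the $h_-$-part discarded. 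I would define $\iota\colon \ker(f^*)\to\coker(f^*)$ by $\iota(h_+,h_-) = [f(h_+,0)] = [\,\text{the $\phi$-fixed subgraph }h_+ \cup \phi(h_+)\,]$, verify it is well-defined (the $\psi$-fixed condition forces $h_-$ to be determined by $h_+$, so no information is lost, and $h_+\cup\phi(h_+)$ is automatically a $\phi$-fixed bicycle of $G$), and then prove injectivity by showing that if $h_+\cup\phi(h_+)$ bounds in $Z^\phi + B^\phi$ then $h_+$ is already a sum of a $\psi$-symmetric cycle and a $\psi$-symmetric bond of $G_+\cup G_-$ — which I would extract by pulling the decomposition of $h_+\cup\phi(h_+)$ back through $f^t$ and using that $f^t f(h_+,0)$ recovers $h_+$ modulo $B_+\oplus B_-$ up to the $\psi$-symmetrization, carefully tracking the subdivided edges where $f^t f$ differs from the identity. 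The bookkeeping around the subdivided $\phi$-fixed edges of $G$ is the one genuinely fiddly point, and I would handle it by treating those edges as a separate direct summand throughout.
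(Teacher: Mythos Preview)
You do eventually land on exactly the map the paper uses---$g(h_+,h_-) := f(h_+,0)$---but you make the injectivity argument far harder than it needs to be by working in the \emph{quotient} presentation of Proposition~\ref{prop:alternateKerCoker}.  The paper instead stays in the \emph{subspace} presentation coming from Propositions~\ref{prop:kercoker}, \ref{prop:coker}, and~\ref{prop:ker}: $\ker(f^*)$ \emph{is} the space of $\psi$-fixed bicycles inside $(Z_+\oplus Z_-)\cap(B_+\oplus B_-)$, and $\coker(f^*)$ \emph{is} the space of $\phi$-fixed bicycles inside $Z\cap B$.  With both source and target realized as honest subspaces of $(\ZZ/2)$-vector spaces, $g$ is a genuine linear map and injectivity is the bare statement ``$f(x,0)=0 \Rightarrow x=0$'' for $x\in Z_+$, with no quotienting to unwind.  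That is what the paper means by the one-line justification ``$f|_{(Z_+\oplus Z_-)^\psi}$ is injective'': if $f(x,0)=0$ then $x$ is supported only on subdivided edges with the two halves of each subdivided edge appearing together, but such an element cannot be a nonzero cycle of $G_+$ because the subdivided edges form a disjoint union of paths (the fixed subgraph $G^\phi$ lies on the line $\ell$ in the drawing and hence is a linear forest).

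Two smaller points.  First, your remark that a $\psi$-fixed bicycle ``additionally satisfies $f(h)=0$'' conflates two conditions that are in fact identical: Proposition~\ref{prop:ker} says $\ker(f) = \{\psi\text{-fixed elements}\}$, so being $\psi$-fixed \emph{is} being killed by $f$.  Second, your detour through $f^t\circ f$ was bound to fail at the prime $2$, as you discovered; the paper never attempts that composite and goes directly to the half-map $g$.  Your eventual plan to prove injectivity by pulling a decomposition in $Z^\phi + B^\phi$ back through $f^t$ would probably work with enough bookkeeping, but it is strictly more labor than the paper's route: once you use the bicycle-subspace model on the target side, there is nothing to pull back.
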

\begin{proof}
  We use the above presentation of these groups as $\psi$ and $\phi$
  fixed bicycles. If $(x,x') \in (\ZZ/2)E_+ \oplus (\ZZ/2)E_-$ is
  $\psi$-fixed, set $g(x,x'):=f(x,0) = f(0,x')$. This restricts to a
  map on the $\psi$-fixed bicycles whose image is in the space of
  $\phi$-fixed bicycles. The map is injective since $f|_{(Z_+ \oplus
    Z_-)^\psi}$ is injective.
\end{proof}

We will use the map $g$ occurring in the proof of the proposition in
what follows. Consider the commutative diagram below, whose horizontal
arrows are those induced by $g$, and whose vertical arrows are the
natural ones.
\begin{align}\label{eq:snake}
\xymatrix{ 0 \ar[d]& 0\ar[d]\\ (Z_+ \oplus Z_-)^\psi \cap (B_+ \oplus B_-)^\psi \ar[r]\ar[d] &
  Z^\phi \cap B^\phi \ar[d]\\
 (Z_+ \oplus Z_-)^\psi \oplus (B_+ \oplus B_-)^\psi \ar[r]\ar[d] &
  Z^\phi \oplus B^\phi\ar[d] \\
 (Z_+ \oplus Z_-)^\psi + (B_+ \oplus B_-)^\psi \ar[r] \ar[d]&
  Z^\phi + B^\phi\ar[d] \\  0 & 0}  
\end{align}
The columns in this diagram are exact. We wish to identify the order
of the cokernel in the top row. For this, we need to compute the
kernel and cokernel in the middle row.
\begin{proposition}\label{prop:forSnake1}
  The dimension of $(B_+ \oplus B_-)^\psi$ is $|V_R| + |E^\phi|$. The
  dimension of $B^\phi$ is $|V_R| + |V^\phi| - 1$. It follows that
  \[
  |B^\phi|/|(B_+ \oplus B_-)^\psi|= 2^{|V^\phi| - |E^\phi| - 1}
  \]
\end{proposition}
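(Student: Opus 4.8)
The plan is to compute each of the two dimensions directly by exhibiting a spanning set for the relevant fixed space and then identifying the kernel of a natural surjection, so that the final ratio follows by subtracting the two dimension formulas and using $|V_L| = |V_R|$ (which holds because $\phi$ interchanges $V_L$ and $V_R$).

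First I would handle $(B_+ \oplus B_-)^\psi$. Since $\psi$ acts on $B_+ \oplus B_-$ by a permutation of a basis-like spanning set coming from the fundamental bonds $\mathrm{b}_{G_+}(v)$ and $\mathrm{b}_{G_-}(v)$, the $\psi$-fixed subspace is spanned by the $\psi$-orbit sums. The orbits are: the two subdivision vertices of each $\phi$-fixed edge of $G$ get swapped (contributing one fixed vector per edge of $E^\phi$), while a vertex $v \in V_L$ of $G_+$ and its partner $\phi(v) \in V_- $ of $G_-$ get swapped (contributing one fixed vector per vertex of $V_L \cong V_R$); the $\phi$-fixed vertices sitting in $G_+$, together with the contracted vertex of $G_-$, need to be tracked carefully since $B_+$ omits one vertex per component of $G_+$ and $B_-$ omits the contracted vertex. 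After accounting for these omissions the count comes out to $|V_R| + |E^\phi|$; I expect this bookkeeping — in particular making sure exactly the right number of bond generators are dependent — to be the one genuinely delicate point, and I would verify it on the running example and the $2n$-cycle example as a sanity check.

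Next I would compute $\dim B^\phi$. Here $B$ is the mod-$2$ bond space of $G$, spanned by $\mathrm{b}_G(v)$ for $v$ ranging over all but one vertex per component, and $\phi$ permutes these generators by its action on $V = V_L \cup V^\phi \cup V_R$. The $\phi$-fixed subspace of $B$ is the image of the averaging/orbit-sum construction; alternatively, and more cleanly, $B^\phi$ is spanned by $\mathrm{b}_G(S)$ for $\phi$-invariant subsets $S$, which reduces to bonds of the quotient-type data: $\mathrm{b}_G(v) + \mathrm{b}_G(\phi(v))$ for $v \in V_L$ and $\mathrm{b}_G(v)$ for $v \in V^\phi$. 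These span a space of dimension $|V_L| + |V^\phi|$ minus the number of linear relations; since $G$ may be taken connected (the disconnected case being handled componentwise, and the interesting case is $G_+$ connected which forces $G$ connected) there is exactly one relation $\sum_v \mathrm{b}_G(v) = 0$, giving $\dim B^\phi = |V_R| + |V^\phi| - 1$.

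Finally, subtracting,
\[
\dim B^\phi - \dim (B_+ \oplus B_-)^\psi = \bigl(|V_R| + |V^\phi| - 1\bigr) - \bigl(|V_R| + |E^\phi|\bigr) = |V^\phi| - |E^\phi| - 1,
\]
so $|B^\phi| / |(B_+ \oplus B_-)^\psi| = 2^{|V^\phi| - |E^\phi| - 1}$, as claimed. The main obstacle, as noted, is the careful orbit-counting for $(B_+ \oplus B_-)^\psi$: one must correctly reconcile the vertex-omission conventions defining $B_+$ and $B_-$ with the swap $\psi$, since $\psi$ is not induced by a graph automorphism and the contracted vertex of $G_-$ does not pair naturally with a single vertex of $G_+$. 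Everything else is linear algebra over $\mathbf{F}_2$ plus the identity $|V_L| = |V_R|$.
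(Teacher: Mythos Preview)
Your approach is essentially the paper's: for $B^\phi$ you symmetrize fundamental bonds exactly as the author does (omit one $\phi$-fixed vertex, keep the remaining $|V^\phi|-1$ fixed bonds, and add the $|V_R|$ symmetrized pairs), and for $(B_+\oplus B_-)^\psi$ you also build a basis from $\psi$-orbit sums of fundamental bonds.

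One slip to correct: each $\phi$-fixed edge of $G$ produces \emph{one} subdivision vertex in $G_+$, not two. What $\psi$ swaps are the two \emph{edges} $e',e''$ incident to that single vertex; hence the bond $\mathrm{b}_{G_+}(w)=e'+e''$ is itself $\psi$-fixed, which is why you get one generator per element of $E^\phi$. Your count survives, but the stated reason does not.

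For the ``genuinely delicate'' bookkeeping you flag, the paper avoids tracking the $\phi$-fixed vertices of $G_+$ and the contracted vertex of $G_-$ directly. Instead it first symmetrizes the $|V_R|$ bonds of $G_-$ at its non-contracted vertices and observes that any $\psi$-fixed element outside their span must have trivial $B_-$-component; the remaining $\psi$-fixed elements then live in $B_+$ alone, and since $\psi$ sends every non-subdivided edge of $E_+$ into $E_-$, such elements are supported on subdivided edges and are spanned by the $|E^\phi|$ subdivision-vertex bonds. This projection-to-$B_-$ argument is a bit cleaner than a full orbit enumeration and sidesteps exactly the awkward pairing you were worried about.
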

\begin{proof}
  A basis for the bond space of $G_-$ is obtained by taking the
  fundamental bonds at all of its vertices except one. We exclude the
  vertex obtained by contracting all of $V^\phi$ to a point. If we
  take these bonds and symmetrize them by $\psi$ we obtain
  $|V_R|=|V_L|$ many linearly independent bonds in $(B_+ \oplus
  B_-)^\psi$. Any $\psi$-fixed bond not contained in the span of these
  cannot be supported on $B_-$. It is clear that the bonds at the
  vertices obtained by subdividing $\phi$-fixed edges complete our
  description of a basis of $(B_+ \oplus B_-)^\psi$.

  A basis for the bond space of $G$ is given by all but one of the
  fundamental bonds at vertices of $G$. We omit a $\phi$-fixed vertex
  from our basis. The remaining $\phi$-fixed vertices have
  $\phi$-fixed bonds. Symmetrizing the bonds of vertices in $V_R$
  yields the rest of a basis for $B^\phi$.
\end{proof}

\begin{proposition}\label{prop:forSnake2}
  Suppose that $G_+$ is connected. There is an equality,
  \[
  \frac{|(Z_+ \oplus Z_-)^\psi|}{|Z^\phi|} = 2^{|V^\phi| - |E^\phi|
    -1}.
  \]
\end{proposition}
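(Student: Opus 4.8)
The plan is to compute the two mod-$2$ dimensions $\dim(Z_+\oplus Z_-)^\psi$ and $\dim Z^\phi$ separately and then subtract. (Throughout, $(Z_+\oplus Z_-)^\psi$ means the subspace of $\psi$-fixed vectors lying in $Z_+\oplus Z_-$; since $\psi$ is not determined by a graph automorphism it need not preserve $Z_+\oplus Z_-$, so a little care is required.) I would identify each of these two spaces with an honest cycle space — with $Z_+$ and with $Z_-$, respectively — after which the statement reduces to an Euler-characteristic count that uses the hypothesis that $G_+$, and hence also $G_-$, is connected. This is the cycle-space analogue of Proposition~\ref{prop:forSnake1}.

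\emph{Identifying $(Z_+\oplus Z_-)^\psi$ with $Z_+$.} Write a vector of $(\ZZ/2)E_+\oplus(\ZZ/2)E_-$ as $(z^+_L + z^+_{\mathrm{sub}},\ z^-)$, with $z^+_L$ supported on $E_L\subset E_+$ and $z^+_{\mathrm{sub}}$ on the subdivision edges. From the definition of $\psi$ one computes
\[
\psi\bigl(z^+_L + z^+_{\mathrm{sub}},\ z^-\bigr) \;=\; \bigl(\phi(z^-) + \sigma(z^+_{\mathrm{sub}}),\ \phi(z^+_L)\bigr),
\]
where $\sigma$ swaps the two halves of each subdivided edge. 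If the vector lies in $Z_+\oplus Z_-$ then, since every subdivision vertex has degree two in $G_+$, being a cycle forces the two subdivision edges there to appear with equal coefficient, so $\sigma(z^+_{\mathrm{sub}})=z^+_{\mathrm{sub}}$; hence the vector is $\psi$-fixed precisely when $z^-=\phi(z^+_L)$. Furthermore, for $z^+\in Z_+$ only $z^+_L$ among the parts of $z^+$ meets $V_L$, so $\partial z^+_L$ vanishes on $V_L$, which is exactly the condition that $\phi(z^+_L)$ be a cycle of $G_-$. Thus $(z^+,z^-)\mapsto z^+$ is an isomorphism $(Z_+\oplus Z_-)^\psi \cong Z_+$.

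\emph{Identifying $Z^\phi$ with $Z_-$.} Decompose a $\phi$-fixed cycle of $G$ as $z=z_L+z_m+z_R$ along $E=E_L\sqcup E^\phi\sqcup E_R$; invariance forces $z_R=\phi(z_L)$. Restricting $\partial z=0$ to $V_L$ gives $\partial z_L|_{V_L}=0$, and restricting to $V^\phi$ — where the $z_L$- and $z_R$-contributions cancel because $\phi$ fixes $V^\phi$ pointwise — gives $\partial z_m=0$, i.e. $z_m$ is a cycle of the subgraph $(V^\phi,E^\phi)$. Here the geometry enters: the $\phi$-fixed edges lie along the axis $\ell$ and the drawing is non-degenerate, so $(V^\phi,E^\phi)$ is a disjoint union of subsegments of $\ell$, hence a forest, hence $z_m=0$. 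Therefore $z=z_L+\phi(z_L)$ with $\partial z_L|_{V_L}=0$, and $z\mapsto\phi(z_L)$ is an isomorphism $Z^\phi\cong Z_-$; conversely every cycle of $G_-$ arises this way. (The same computation shows that a $\phi$-fixed bicycle of $G$ carries no $E^\phi$-edge, which is implicitly needed for the combinatorial descriptions in Section~\ref{sec:ker-coker}.)

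\emph{The dimension count.} If $G_+$ is connected then so is $G_-$: a vertex of $V_R$ is joined inside $G$ to $V^\phi$ using only $E_R$-edges — apply $\phi$ to a connecting path in $G_+$, noting that a $V_R$-vertex has only $E_R$-edges available — hence it is joined to the contracted vertex of $G_-$. So both $G_\pm$ are connected and $\dim Z_\pm = |E_\pm|-|V_\pm|+1$. Substituting $|E_+|=|E_L|+2|E^\phi|$, $|V_+|=|V_L|+|V^\phi|+|E^\phi|$, $|E_-|=|E_L|$ and $|V_-|=|V_L|+1$ and subtracting, one reads off the asserted power of $2$. The genuine obstacle here is the geometric input used in identifying $Z^\phi$ with $Z_-$ (that the $\phi$-fixed edges, being collinear, span a forest): it is essential, and it is the only point where non-degeneracy of the drawing, rather than merely the combinatorial involution, is needed. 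A lesser subtlety is that $\psi$ is not an automorphism, so one must verify, as above, that $(Z_+\oplus Z_-)^\psi$ is a genuine subspace and that the two projections are well defined.
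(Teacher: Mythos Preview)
Your argument is correct and takes a genuinely different route from the paper's. The paper works directly with the injection $g:(Z_+\oplus Z_-)^\psi\hookrightarrow Z^\phi$ and writes down, by hand, a basis $z_{01},\dots,z_{(m-1)m}$ for its cokernel (one generator for each pair of adjacent components of $G^\phi$), checking spanning and independence by a path argument. You instead identify the two spaces with honest cycle spaces, $(Z_+\oplus Z_-)^\psi\cong Z_+$ and $Z^\phi\cong Z_-$, and then read the answer off from $|E_\pm|-|V_\pm|+1$. Your identifications are sound; in particular the key step---that any $\phi$-fixed cycle carries no $E^\phi$-edges because $(V^\phi,E^\phi)$ is a forest---is exactly the geometric input the paper also uses (it needs $G^\phi$ to be a forest to equate $|V^\phi|-|E^\phi|$ with the number of components of $G^\phi$). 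Your approach is cleaner and avoids the somewhat delicate spanning argument; the paper's approach, on the other hand, produces explicit representatives for $\coker g$, which is relevant to the open problem in Section~6.

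One arithmetical caveat: carrying out your subtraction gives
\[
\dim Z_+-\dim Z_-=\bigl(|E_L|+2|E^\phi|-|V_L|-|V^\phi|-|E^\phi|+1\bigr)-\bigl(|E_L|-|V_L|\bigr)=|E^\phi|-|V^\phi|+1,
\]
so what you have actually proved is $|(Z_+\oplus Z_-)^\psi|/|Z^\phi|=2^{-(|V^\phi|-|E^\phi|-1)}$, the reciprocal of the displayed formula. This is not an error on your part: the paper's own proof (an injection $g$ into $Z^\phi$ with cokernel of that size) also yields $|Z^\phi|/|(Z_+\oplus Z_-)^\psi|=2^{|V^\phi|-|E^\phi|-1}$, so the fraction in the proposition is written upside-down. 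Either orientation feeds correctly into the final theorem.
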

\begin{proof}
  The idea is to consider the injection $g:(Z_+ \oplus Z_-)^\psi \to
  Z^\phi$, and compute a basis for its cokernel. For this we note that
  $|V^\phi| - |E^\phi| -1$ is the number of connected components of
  $G^\phi$.

  Choose one vertex from each connected component of $G^\phi$,
  $v_0,v_1,\dots, v_m$. In $G_+$, take a path $p_{ij}$ connecting
  $v_i$ to $v_j$. Viewing $p_{ij}$ as a path in $G$, we form the cycle
  $z_{ij} = p_{ij} + \phi(p_{ij}) \in Z^\phi$. We claim that the
  cycles $\{z_{ij}\}$ are not in the image of $g$. If there was a
  cycle in $G_-$ lifting $z_{ij}$ then it would differ from $p_{ij}$
  by a sum of bonds of vertices obtained by subdividing $\phi$-fixed
  edges. Since $v_i$ and $v_j$ are not connected by a path in $G^\phi$
  we see that $z_{ij} \notin \im(g)$.

  Let $v_i'$ and $v_j'$ be two vertices in the same connected
  component of $G^\phi$ as $v_i$ and $v_j$, respectively. If $p'_{ij}$
  is a path connecting $v'_i$ to $v'_j$ and $z'_{ij} = p'_{ij} +
  \phi(p'_{ij})$, then $z_{ij} + z'_{ij} \in \im(g)$. This is because
  we have a cycle of $G_+$, $p_{ij} + p'_{ij} +$(a subdivided path in
  $G^\phi$ from $v_i$ to $v'_i$)+(a subdivided path in $G^\phi$ from
  $v_j$ to $v'_j$). Applying $\psi$ to this cycle yields a cycle in
  $G_-$, since $p_{ij}$ must touch the axis of symmetry an even number
  of times. We conclude from this $z_{ij} = z'_{ij} \in Z^\phi/\im(g)$
  and that $z_{i(i+1)} + \dots +z_{(j-1)j}$ is equivalent to $z_{i,j}$
  in $Z^\phi/\im(g)$.

  We now claim that (the images of) $z_{01}, z_{12},\dots,z_{(m-1)m}$
  form a basis for $Z^\phi / \im(g)$. They are linearly independent
  since the paths $\{p_{i(i+1)}\}$ are linearly independent in
  $(\ZZ/2) E_+$. If $z$ represents a cycle in $G$ that is not in
  $\im(g)$, then $z$ visits each of some even number of connected
  component of $G^\phi$ an twice odd number of times. Subtracting off
  elements of the form $z_{ij}$, where $i$ and $j$ label two
  components visited an odd number of times by $z$, we conclude that
  $\{z_{ij}\}$ spans $Z^\phi/\im(g)$. From this we have $\{z_{i(i+1)}:
  i=0,\dots,m-1\}$ spans $Z^\phi/\im(g)$
\end{proof}

\begin{proposition}\label{prop:forSnake3}
  There is an equality,
  \[
  \frac{|(Z_+ \oplus Z_-)^\psi + (B_+ \oplus
    B_-)^\psi|}{|Z^\phi + B^\phi|} = \frac{|\ker(f^*)|}{|\coker(f^*)|}
  \]
\end{proposition}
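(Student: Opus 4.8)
The plan is to extract this identity from the commutative diagram \eqref{eq:snake} by a diagram chase. The diagram has exact columns, and the two horizontal maps in the top two rows are induced by the injective map $g$ (Proposition~\ref{prop:kerIntoCoker}), while the bottom horizontal map is the induced map on the sums. Since every column is exact, applying the snake lemma to the (transposed) diagram — viewing the three rows as a short exact sequence of two-term complexes, or equivalently tracking kernels and cokernels down the columns — produces a six-term exact sequence relating the kernels and cokernels of the three horizontal maps. What I actually need is cruder: just a multiplicativity of orders. Because all groups in sight are finite (indeed elementary abelian $2$-groups), the exactness of the columns gives $|Z^\phi \oplus B^\phi| = |Z^\phi \cap B^\phi|\cdot|Z^\phi + B^\phi|$ and likewise with the $+$ side, and similarly the middle-row map is a map of groups of computable order.

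First I would record that the top horizontal map $g\colon (Z_+\oplus Z_-)^\psi \cap (B_+\oplus B_-)^\psi \to Z^\phi\cap B^\phi$ is exactly the map whose cokernel is $\coker(f^*)$ and whose kernel is $\ker(f^*)$: this is precisely the content of Propositions~\ref{prop:kercoker}, \ref{prop:coker}, \ref{prop:ker} together with the presentation of $\ker(f^*),\coker(f^*)$ as $\psi$- and $\phi$-fixed bicycles from Proposition~\ref{prop:alternateKerCoker}. Hence $|Z^\phi\cap B^\phi| / |(Z_+\oplus Z_-)^\psi\cap(B_+\oplus B_-)^\psi| = |\coker(f^*)|/|\ker(f^*)|$. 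Next I would observe that the middle horizontal map, $(Z_+\oplus Z_-)^\psi \oplus (B_+\oplus B_-)^\psi \to Z^\phi \oplus B^\phi$, is $g$ on the first summand and $g$ again (restricted to bonds) on the second; it is injective on both summands — injectivity on cycles is from the proof of Proposition~\ref{prop:kerIntoCoker}, and injectivity on bonds follows because $f$ is injective on the $\psi$-fixed subspace (the same matrix-in-a-good-basis computation from Proposition~\ref{prop:ker}). So the middle map has trivial kernel and its cokernel has order $|Z^\phi|/|(Z_+\oplus Z_-)^\psi| \cdot |B^\phi|/|(B_+\oplus B_-)^\psi|$.

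Now I would run the $3\times 3$ argument on orders. From the exact columns, $|(Z_+\oplus Z_-)^\psi + (B_+\oplus B_-)^\psi| = |(Z_+\oplus Z_-)^\psi\oplus(B_+\oplus B_-)^\psi| \,/\, |(Z_+\oplus Z_-)^\psi\cap(B_+\oplus B_-)^\psi|$ and $|Z^\phi+B^\phi| = |Z^\phi\oplus B^\phi|\,/\,|Z^\phi\cap B^\phi|$. Dividing, the middle-row contributions are $|Z^\phi\oplus B^\phi|/|(Z_+\oplus Z_-)^\psi\oplus(B_+\oplus B_-)^\psi|$, which telescopes, and the intersection contributions are $|(Z_+\oplus Z_-)^\psi\cap(B_+\oplus B_-)^\psi|/|Z^\phi\cap B^\phi| = |\ker(f^*)|/|\coker(f^*)|$ by the first step. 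The middle-row ratio I would compute as the product $\big(|Z^\phi|/|(Z_+\oplus Z_-)^\psi|\big)\cdot\big(|B^\phi|/|(B_+\oplus B_-)^\psi|\big)$; the second factor is $2^{|V^\phi|-|E^\phi|-1}$ by Proposition~\ref{prop:forSnake1}, and (when $G_+$ is connected) the first factor is $2^{-(|V^\phi|-|E^\phi|-1)}$ by Proposition~\ref{prop:forSnake2}, so the middle-row ratio is $1$ and the two sides of the desired identity agree. (Without the connectedness hypothesis one still gets the identity, since the middle-row ratio is always the product of the two order ratios and these cancel formally; but I would state it as above, in parallel with the preceding propositions.)

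The only genuine subtlety — the rest is bookkeeping with orders of finite groups — is verifying that the middle horizontal map is injective on \emph{both} summands and, correspondingly, that the bottom horizontal map $(Z_+\oplus Z_-)^\psi + (B_+\oplus B_-)^\psi \to Z^\phi+B^\phi$ is well defined and that the induced maps fit into \eqref{eq:snake} commutatively. Injectivity on the cycle summand is already in hand; for the bond summand I would note that $g$ agrees with $f$ on the $\psi$-fixed subspace and invoke the good-basis form of the matrix of $f$ from Proposition~\ref{prop:ker}, which shows $\ker f \cap (\ZZ/2)(E_+\cup E_-)^\psi = 0$. Granting that, the order-counting closes immediately and no further computation is required.
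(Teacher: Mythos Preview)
Your route is valid bookkeeping once the connectedness hypothesis is in force, but it is both far more elaborate than needed and, as written, does not prove the proposition as stated. The paper's proof is a two-line computation: Proposition~\ref{prop:alternateKerCoker} gives
\[
|\ker(f^*)| = \frac{|((\ZZ/2)(E_+\cup E_-))^\psi|}{|(Z_+\oplus Z_-)^\psi + (B_+\oplus B_-)^\psi|},\qquad
|\coker(f^*)| = \frac{|((\ZZ/2)E)^\phi|}{|Z^\phi + B^\phi|},
\]
and one then observes that the ambient fixed spaces have the same order, $|((\ZZ/2)E)^\phi| = |((\ZZ/2)(E_+\cup E_-))^\psi|$, since both have dimension $|E_L|+|E^\phi|$. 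Dividing yields the identity immediately, with no appeal to Propositions~\ref{prop:forSnake1} or~\ref{prop:forSnake2} and no hypothesis on $G_+$.

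Your argument, by contrast, feeds Propositions~\ref{prop:forSnake1} and~\ref{prop:forSnake2} into the $3\times 3$ order count to force the middle-row ratio to equal $1$. This imports the connectedness assumption from Proposition~\ref{prop:forSnake2}, which Proposition~\ref{prop:forSnake3} does not carry; your parenthetical that the two factors ``cancel formally'' without connectedness is not justified anywhere and is exactly the content you would need to prove. Worse, you are essentially replaying the proof of the Main Theorem inside what is meant to be one of its ingredients: the paper deliberately establishes Proposition~\ref{prop:forSnake3} independently so that the alternating-product computation in the theorem can combine it with the other two ratios. Your version collapses that structure and leaves the unconditional statement unproved.
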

\begin{proof}
  This follows from Proposition~\ref{prop:alternateKerCoker} by
  multiplying and dividing the left side by $|((\ZZ/2)E)^\phi|$, which
  is equal to $|((\ZZ/2)(E_+ \cup E_-))^\psi|$.
\end{proof}

We are \textit{finally} in a position to prove the remaining part of
the Main Theorem.
\begin{theorem}
  Suppose that $G_+$ is connected. Then,
  \[
  \frac{|K(G_+) \oplus K(G_-)|}{|K(G)|}
  =
  \frac{|\ker(f^*)|}{|\coker(f^*)|} = 2^{|V^\phi|-|E^\phi|-1}.
  \]
\end{theorem}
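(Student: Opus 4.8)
The plan is to extract the desired equality from the commutative diagram \eqref{eq:snake} by a snake-lemma (or counting) argument, using the three intermediate computations already set up. First I would observe that the middle and bottom rows of \eqref{eq:snake} have cokernels whose orders we can compute: by Proposition~\ref{prop:forSnake1} together with Proposition~\ref{prop:forSnake2}, the cokernel of the middle horizontal map $(Z_+ \oplus Z_-)^\psi \oplus (B_+ \oplus B_-)^\psi \to Z^\phi \oplus B^\phi$ has order $2^{|V^\phi|-|E^\phi|-1} \cdot 2^{|V^\phi|-|E^\phi|-1} = 2^{2(|V^\phi|-|E^\phi|-1)}$, provided the map $g$ is injective on each summand (which it is, by Proposition~\ref{prop:kerIntoCoker} and the parallel statement for bonds, so the middle map is injective with the stated cokernel order).

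Next I would apply the snake lemma to the three columns (all exact, as noted) and the three horizontal maps of \eqref{eq:snake}. Since the top and middle horizontal maps are injective, the connecting homomorphism shows that the kernel of the bottom map $(Z_+ \oplus Z_-)^\psi + (B_+ \oplus B_-)^\psi \to Z^\phi + B^\phi$ together with the cokernels of the three rows fit into a six-term exact sequence. Taking orders in that exact sequence yields
\[
\frac{|\coker(\text{middle})|}{|\coker(\text{top})| \cdot |\coker(\text{bottom})|} = \frac{1}{|\ker(\text{bottom})|},
\]
so that $|\coker(\text{top})| \cdot |\coker(\text{bottom})| \cdot |\ker(\text{bottom})|^{-1} = |\coker(\text{middle})| = 2^{2(|V^\phi|-|E^\phi|-1)}$. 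The top cokernel is precisely $\coker(g : (Z_+\oplus Z_-)^\psi\cap(B_+\oplus B_-)^\psi \to Z^\phi\cap B^\phi)$, which by Proposition~\ref{prop:kercoker} and the identification of $\ker(f^*)$, $\coker(f^*)$ with $\psi$- and $\phi$-fixed bicycles is related to $|\ker(f^*)|$ and $|\coker(f^*)|$; and $|\ker(\text{bottom})|/|\coker(\text{bottom})| = |\ker(f^*)|/|\coker(f^*)|$ is exactly Proposition~\ref{prop:forSnake3}.

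Combining these, I would solve for $|\ker(f^*)|/|\coker(f^*)|$ and check that the bookkeeping collapses to $2^{|V^\phi|-|E^\phi|-1}$: the product of the two ``$2^{|V^\phi|-|E^\phi|-1}$'' factors coming from bonds and cycles is cut in half by the single factor tracked in Proposition~\ref{prop:forSnake3}, leaving one copy. Finally, the equality $|K(G_+)\oplus K(G_-)|/|K(G)| = |\ker(f^*)|/|\coker(f^*)|$ is immediate from the four-term exact sequence $0 \to \ker(f^*) \to K(G_+)\oplus K(G_-) \to K(G) \to \coker(f^*) \to 0$ displayed in the introduction, by multiplicativity of orders. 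I expect the main obstacle to be the careful identification of which cokernel in the snake sequence is which—specifically, pinning down that $\coker(g)$ on the intersection row equals $\coker(f^*)/\ker(f^*)$ up to the factor handled by Proposition~\ref{prop:forSnake3}, and making sure the connecting map in the snake lemma is accounted for correctly rather than double-counted; once the exact sequence is written down honestly, the order count is routine.
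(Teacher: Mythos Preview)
Your approach is essentially the same as the paper's. Both arguments run the order count through the diagram \eqref{eq:snake} using Propositions~\ref{prop:kerIntoCoker}--\ref{prop:forSnake3}; the paper simply takes the alternating product of orders down each exact column and divides, obtaining
\[
1 = 2^{2(|V^\phi|-|E^\phi|-1)} \frac{|\ker(f^*)|^2}{|\coker(f^*)|^2},
\]
then takes the square root, whereas you package the same count via the snake lemma applied to the transposed diagram. One small point to tighten: the injectivity of $g$ on $(B_+ \oplus B_-)^\psi$ is not literally Proposition~\ref{prop:kerIntoCoker} nor stated separately in the paper, but it follows immediately since $g(x,x') = f(x,0)$ and $f$ restricted to $(\ZZ/2)E_+$ is visibly injective; and your ``top cokernel'' is exactly $|\coker(f^*)|/|\ker(f^*)|$, not merely ``related to'' it, since $g$ on the intersection row is precisely the injection $\ker(f^*)\hookrightarrow\coker(f^*)$ of Proposition~\ref{prop:kerIntoCoker}.
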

\begin{proof}
  Take the alternating product of the orders of the groups in the
  first column of the diagram \eqref{eq:snake} and divide by the
  alternating product for the second column. We obtain,
  \[
  1=\frac{|(Z_+ \oplus Z_-)^\psi \cap (B_+ \oplus B_-)^\psi|\cdot
    |(Z_+ \oplus Z_-)^\psi + (B_+ \oplus B_-)^\psi| \cdot |Z^\phi
    \oplus B^\phi|}{|(Z_+ \oplus Z_-)^\psi \oplus (B_+ \oplus
    B_-)^\psi|\cdot |Z^\phi \cap B^\phi| \cdot |Z^\phi +
    B^\phi|}.
  \]
  Applying Propositions~\ref{prop:kerIntoCoker}, \ref{prop:forSnake1},
  \ref{prop:forSnake2}, and \ref{prop:forSnake3} this yields,
  \[
  1 = 2^{2(|V^\phi|-|E^\phi|-1)} \frac{|\ker(f^*)|^2 }{|\coker(f^*)|^2}.
  \]
  Manipulating this fraction and taking the square root proves the
  theorem.
\end{proof}
\section{Open problems}
It would be desirable to actually exhibit bicycles forming a basis of
the cokernel of $g: \ker(f^*) \to \coker(f^*)$. This appears to be
difficult and subtle, since it requires producing linearly independent
bicycles in $G$. When $G$ is planar the left-right tours of Shank
\cite{shank} could possibly be used to furnish the needed bicycles.

There is a more general version of Theorem~\ref{thm:motivation} given
by Yan and Zhang \cite{yanZhang}. It allows for an arbitrary
involution on a weighted graph drawn in the plane, essentially meaning
that we relax the condition that edges cannot cross the axis of
symmetry.

The construction of an appropriate version of $G_+$ and $G_-$ is more
involved in this case, but a result of the form $\kappa(G) = 2^m
\kappa(G_+) \kappa(G_-)$ is obtained. The integer $m$ appearing in
this formula might be negative in general, and positive integer
weights for $G$ might involve half integer weights for $G_+$ and
$G_-$. It would be interesting to see a critical group generalization
of this result.

\appendix

\section{Critical groups of adjoint pairs}\label{sec:theory}
The point of these appendices is to gather and prove algebraic results
about critical groups for the previous work. Some of these results can
be found in Bacher, de la Harpe, Nagnebeda \cite{bacherEtAlia} and
Treumann's bachelors thesis \cite{treumann}

Following Treumann \cite{treumann}, we consider the category $\Adj$,
whose objects are adjoint pairs $(\partial, \partial^t)$ of linear
maps,
\[
\partial : C_1 \to C_0, \quad \partial^t : C_0 \to C_1
\]
between two finitely generated free $\ZZ$-modules $C_1$ and $C_0$. We
assume that both $C_1$ and $C_0$ are both equipped with a positive
definite inner product (both denoted $\<-,-\>$) and have bases which
are orthonormal with respect to these inner products. The
adjointness of the maps $\partial$, $\partial^t$ means that for all
$v \in C_0$ and $e \in C_1$,
\[
\<\partial e, v \> =\<e,\partial^t v\>.
\]

Let $(\partial,\partial^t)$ be an adjoint pair as above. We define $Z
:= \ker(\partial)$ and $B := \im(\partial^t)$. The \textbf{critical
  group} of $(\partial,\partial^t)$ is
\[
K=K(\partial,\partial^t) := C_1/(Z+ B).
\]

A morphism between two adjoint pairs $(\partial,\partial^t)$,
$(\partial',(\partial')^t)$ is a pair of linear maps
\[
f = (f_1 : C_1 \to C_1', f_0 : C_0 \to C_0' ),
\]
subject to the intertwining conditions
\[ 
f_0 \partial = \partial' f_1, \qquad f_1 \partial^t = (\partial')^t
f_0 \mod B'.
\]

A morphism $f=(f_1,f_0)$ between two pairs $(\partial, \partial^t)$
and $(\partial',(\partial')^t)$ induces maps
\[
f^*_1 : K \to K',
\]
as one checks that $f_1$ takes $Z$ into $Z'$ and $B$ into $B'$. 
\begin{proof}
  Suppose that $z \in Z$. Then $\partial' f_1(z) = f_0 (\partial z) =
  0$. Suppose that $b = (\partial')^t v$. Then $f_1((\partial')^t v)
  =\partial^t f_0(v) \mod B'$, hence $f_1((\partial')^t v)$ is an
  element of $B'$.
\end{proof}
In this way, the critical group is a functor $\Adj \to \mathbf{Ab}$
from $\Adj$ to the category of finitely generated abelian groups.

There is an alternate definition of the critical group in terms of the
Laplacian operator $\partial \partial^t : C_0 \to C_0$.
\begin{proposition}[Treumann~{\cite{treumann}}]
  The induced map $\partial : K \to \coker( \partial \partial^t)$ is
  injective and there is a direct sum decomposition,
  \[
  K \oplus \coker(\partial) = \coker(\partial \partial^t).
  \]
  The order of $K$ is the absolute value of the maximal minors of
  $\partial \partial^t$.
\end{proposition}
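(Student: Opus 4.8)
The plan is to derive everything from the orthogonality of $Z$ and $B$ inside $C_1$ together with the identity $\ker(\partial\partial^t)=\ker(\partial^t)$. First I would record the preliminaries. Since $\<z,\partial^t v\>=\<\partial z,v\>=0$ for $z\in Z$, the submodules $Z$ and $B$ are orthogonal; positive definiteness of the form forces $Z\cap B=0$, so $Z+B=Z\oplus B$, and because $\partial$ and $\partial^t$ have the same rank this sum has full rank in $C_1$, so $K$ is finite. Applying the same observation to $\<\partial\partial^t v,v\>=\|\partial^t v\|^2$ shows $\ker(\partial\partial^t)=\ker(\partial^t)$; hence $\partial\partial^t$ has the same rank as $\partial$, and $\im(\partial\partial^t)$ is a finite-index submodule of $\im(\partial)$.

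Next I would produce the map and the exact sequence. The boundary map $\partial\colon C_1\to C_0$ sends $z+\partial^t v$ to $\partial\partial^t v$, so it annihilates $Z+B$ modulo $\im(\partial\partial^t)$ and descends to $\bar\partial\colon K\to\coker(\partial\partial^t)$. Injectivity is immediate: if $\partial x=\partial\partial^t v$ then $x-\partial^t v\in\ker\partial=Z$, whence $x\in Z+B$ and $[x]=0$ in $K$. The image of $\bar\partial$ is $(\im\partial+\im\partial\partial^t)/\im\partial\partial^t=\im\partial/\im\partial\partial^t$, which is exactly the kernel of the tautological surjection $\coker(\partial\partial^t)\to\coker(\partial)$ (defined because $\im\partial\partial^t\subseteq\im\partial$). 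This yields the short exact sequence
\[
0\to K\stackrel{\bar\partial}{\longrightarrow}\coker(\partial\partial^t)\to\coker(\partial)\to 0 .
\]

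To obtain the direct sum I would split this sequence. In the situation of the appendix $\partial$ is the boundary map of a graph, so $\im(\partial)$ is a direct summand of $\ZZ V$ — it is the kernel of the map summing coordinates over each connected component — and therefore $\coker(\partial)$ is free; a surjection onto a free module splits. Concretely, writing $C_0=\im(\partial)\oplus N$ with $N$ free and using $\im\partial\partial^t\subseteq\im\partial$, one gets $\coker(\partial\partial^t)=\bigl(\im\partial/\im\partial\partial^t\bigr)\oplus N$, and unwinding the identifications this is $\im(\bar\partial)\oplus\coker(\partial)\cong K\oplus\coker(\partial)$, compatibly with the maps above. For the order statement I would invoke the standard fact that for an integer matrix $M$ of rank $r$ the product $d_1\cdots d_r$ of its nonzero invariant factors equals both the $\gcd$ of the $r\times r$ minors of $M$ and the order of the torsion subgroup of $\coker(M)$. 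Applying this to $M=\partial\partial^t$ and feeding in the direct sum decomposition gives $d_1\cdots d_r=|K|\cdot|\coker(\partial)_{\mathrm{tors}}|=|K|$, since $\coker(\partial)$ is torsion-free; thus $|K|$ is the $\gcd$ of the maximal minors of $\partial\partial^t$, which is the absolute value asserted.

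I expect the splitting to be the one real obstacle. It is the only step that uses more than formal diagram-chasing, and it is exactly where the hypothesis that $\coker(\partial)$ be torsion-free — equivalently that $\im(\partial)$ be a direct summand of $C_0$ — must be invoked; for the boundary map of a graph this is automatic, but it is worth isolating so that the argument does not silently assume it. Everything else is bookkeeping with the orthogonal decomposition $C_1\supseteq Z\oplus B$ and the chain of inclusions $\im\partial\partial^t\subseteq\im\partial\subseteq C_0$.
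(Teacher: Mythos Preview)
The paper does not actually prove this proposition; it is attributed to Treumann's thesis and stated without argument, so there is no proof in the paper to compare against. Your argument is correct and is the standard one: build the short exact sequence $0 \to K \to \coker(\partial\partial^t) \to \coker(\partial) \to 0$ from the chain $\im(\partial\partial^t)\subseteq\im(\partial)\subseteq C_0$, then split it.

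You are right that the splitting is the only non-formal step and that it requires $\coker(\partial)$ to be torsion-free (equivalently, $\im(\partial)$ a direct summand of $C_0$). This holds for the boundary map of a graph but fails for a general adjoint pair --- take $\partial=2:\ZZ\to\ZZ$, where $K\cong\ZZ/2$, $\coker(\partial)\cong\ZZ/2$, and $\coker(\partial\partial^t)\cong\ZZ/4$ is not their direct sum --- so your caution is warranted: the proposition as written in the appendix tacitly needs this extra hypothesis. Your reading of ``the maximal minors'' as the $\gcd$ of the rank-sized minors (i.e.\ the product of the nonzero invariant factors) is likewise the correct general interpretation; the paper's phrasing with the definite article is really the graph-specific Matrix--Tree statement.
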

When $\partial = \partial(G)$ for a graph $G$, the cokernel of
$\partial$ is a free $\ZZ$-module whose rank is the number of
connected components of $G$. In this case the absolute value of the
maximal minors of $\partial \partial^t$ is the given by Kirchhoff's
Theorem as the number of spanning forests of $G$.

A morphism $f=(f_1,f_0)$ between two pairs $(\partial, \partial^t)$
and $(\partial',(\partial')^t)$ gives rise to a natural map on the
quotient
\[
f_0^* : \coker( \partial \partial^t) \to \coker( \partial'
(\partial')^t)
\]
This map restricts to the critical group summands, and we have the
following result.
\begin{theorem}[Treumann~{\cite{treumann}}]
  The induced maps
  \[
  f_1^* : K \to K', \qquad f_0^*: K \to K'
  \]
  are equal.
\end{theorem}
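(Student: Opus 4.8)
The plan is to factor both maps through the injection $\bar\partial\colon K\hookrightarrow\coker(\partial\partial^t)$ supplied by the preceding proposition, and to check that they become equal after composing with the corresponding injection $\bar\partial'$ on the target side; since $\bar\partial'$ is injective, the desired equality of maps $K\to K'$ follows. Concretely, $\partial$ descends to a map $\bar\partial\colon K\to\coker(\partial\partial^t)$ because it annihilates $Z$ and sends $B=\im\partial^t$ into $\im(\partial\partial^t)$; write $\bar\partial'$ for the analogous map for $(\partial',(\partial')^t)$, which by the cited proposition is injective. Recall also that $f_0$ induces $f_0^*\colon\coker(\partial\partial^t)\to\coker(\partial'(\partial')^t)$, and that this is well defined precisely because of the second intertwining relation: $f_0\partial\partial^t=\partial' f_1\partial^t\equiv\partial'(\partial')^t f_0\pmod{B'}$ while $\partial' B'=\im(\partial'(\partial')^t)$.

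The core of the argument is then a one-line diagram chase. Take $x\in C_1$ with class $[x]\in K$. Then
\[
f_0^*\bigl(\bar\partial[x]\bigr)=\bigl[f_0(\partial x)\bigr]=\bigl[\partial'(f_1x)\bigr]=\bar\partial'\bigl(f_1^*[x]\bigr),
\]
the middle equality being the first intertwining relation $f_0\partial=\partial' f_1$. Hence $f_0^*\circ\bar\partial=\bar\partial'\circ f_1^*$. In particular $f_0^*$ carries $\im\bar\partial$ into $\im\bar\partial'$, which re-establishes, for the summand in question, the assertion made just before the theorem that $f_0^*$ restricts to the critical group summands; and, identifying $K$ with $\im\bar\partial$ and $K'$ with $\im\bar\partial'$, the restricted map $f_0^*|_K=(\bar\partial')^{-1}\circ f_0^*\circ\bar\partial$ equals $f_1^*$. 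This is exactly the claimed equality $f_1^*=f_0^*$.

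I do not expect a genuine obstacle here; the work is entirely bookkeeping. The two points that require a little care are (i) confirming that $f_0^*$ is well defined on $\coker(\partial\partial^t)$ — this is where the second intertwining condition $f_1\partial^t\equiv(\partial')^t f_0\pmod{B'}$ is actually used, whereas the equality of the two maps uses only the first — and (ii) keeping straight that the statement ``$f_0^*$ equals $f_1^*$ on $K$'' refers to the conjugated map $(\bar\partial')^{-1}\circ f_0^*\circ\bar\partial$, so that the displayed identity genuinely delivers an equality of homomorphisms $K\to K'$ rather than merely a commutative square inside $\coker(\partial'(\partial')^t)$.
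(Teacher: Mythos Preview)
Your argument is correct. The paper itself does not supply a proof of this theorem; it is stated with attribution to Treumann's thesis and used as a black box, so there is no in-paper proof to compare against. Your route---pushing both maps into $\coker(\partial'(\partial')^t)$ via $\bar\partial$ and $\bar\partial'$, using the first intertwining relation $f_0\partial=\partial' f_1$ to get $f_0^*\circ\bar\partial=\bar\partial'\circ f_1^*$, and then invoking injectivity of $\bar\partial'$---is the natural one and is essentially the argument one finds in Treumann. Your remarks on where each intertwining relation is actually used (the second only for well-definedness of $f_0^*$ on the Laplacian cokernel, the first for the equality itself) are accurate and worth keeping.
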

As such, we will drop the subscript on $f$ when referring to the map it
induces on critical groups.

Let $f$ be a morphism in $\Adj$, taking $(\partial,\partial^t)$ to
$(\partial',(\partial')^t)$. The map $f$ has an adjoint $f^t$ taking
$(\partial',(\partial')^t)$ to $(\partial, \partial^t)$, given by
taking the adjoint of the constituent functions. This means that
\[
\< f_1 e, e' \> = \< e, f_1^t e' \>, \qquad \< f_0 v, v' \> = \< v,
f_0^t v' \>.
\]
The pair $f^t = (f_1^t,f_0^t)$ is a morphism in $\Adj$. The maps $f^*
: K \to K'$ and $(f^t)^*$ are related by the following result.
\begin{proposition}\label{prop:pontryagin}
  There is a commutative square
  \[
  \xymatrix{
    K  \ar[d]^{\approx}  & \ar[l]^{{f^t}^*} K' \ar[d]_{\approx}\\
    \Hom_\ZZ(K,\QQ/\ZZ)\ar@{<-}[r]^{\circ f^*} & \Hom_\ZZ(K',\QQ/\ZZ)
  }
  \]
  There are natural isomorphisms $\ker(f^*) \approx \coker({f^t}^*)$,
  $\coker(f^*) \approx \ker({f^t}^*)$.
\end{proposition}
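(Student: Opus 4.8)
The plan is to establish the commutative square first, since the two isomorphism statements will follow from it by applying the snake lemma (or just by chasing the definitions of kernel and cokernel through an isomorphism of maps). The key input is the fact, recorded earlier in the appendix, that $K$ is a finite abelian group whose order equals the absolute value of the maximal minors of $\partial\partial^t$; finiteness is what makes Pontryagin duality on the nose available. The left vertical arrow is the \emph{monodromy pairing} (or ``linking form'') on $K$: given $[e],[e']\in K=C_1/(Z+B)$, since $K$ is finite there is an integer $n$ with $ne'\in Z+B$, so we may write $ne' = z + \partial^t v$ with $z\in Z$, $v\in C_0$; then one sets $\langle\!\langle [e],[e']\rangle\!\rangle := \tfrac1n\langle e, \partial^t v\rangle = \tfrac1n \langle \partial e, v\rangle \bmod \ZZ$. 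I would carry out the standard verification that this is well-defined (independent of $n$, of the representative $e'$, and of the decomposition), $\ZZ$-bilinear, symmetric, and \emph{nondegenerate} — the last point using that $|K|$ is the determinant of $\partial\partial^t$ on $(\ker\partial)^\perp$, so the Gram-type matrix defining the pairing is unimodular over $\QQ/\ZZ$. This produces the isomorphism $K\xrightarrow{\ \approx\ }\Hom_\ZZ(K,\QQ/\ZZ)$, and likewise $K'\xrightarrow{\ \approx\ }\Hom_\ZZ(K',\QQ/\ZZ)$.

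Next I would check commutativity of the square, i.e.\ that for $[e']\in K'$ one has, as elements of $\Hom_\ZZ(K,\QQ/\ZZ)$,
\[
\langle\!\langle (f^t)^*[e'],\, -\,\rangle\!\rangle_{K} \;=\; \langle\!\langle [e'],\, f^*(-)\,\rangle\!\rangle_{K'}.
\]
Unwinding both sides: pick $[e]\in K$, choose $n$ with $nf^*[e]=n[f_1 e]\in Z'+B'$, write $nf_1 e = z' + (\partial')^t v'$; then the right-hand side is $\tfrac1n\langle \partial'(f_1 e), v'\rangle = \tfrac1n\langle f_0(\partial e), v'\rangle = \tfrac1n\langle \partial e, f_0^t v'\rangle \bmod\ZZ$. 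On the left-hand side, $(f^t)^*[e'] = [f_1^t e']$, and one evaluates its pairing with $[e]$ using the \emph{same} $n$: here the morphism relation $f_1^t(\partial')^t = \partial^t f_0^t \bmod B$ is what makes the two computations land on the same rational number modulo $\ZZ$. This is really just the adjointness identities $\langle f_1 e, e'\rangle = \langle e, f_1^t e'\rangle$ and $f_0\partial = \partial' f_1$ pushed through the definition of the pairing; I expect it to be the technically fussiest step, mostly in keeping track of which $n$ and which representatives are used on each side so that the equality is literal and not just ``up to obvious identifications.''

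Finally, the two natural isomorphisms fall out formally. Since the square commutes with vertical arrows isomorphisms, the map $(f^t)^*\colon K'\to K$ is isomorphic (as an arrow) to the map $\Hom(f^*,\QQ/\ZZ)\colon \Hom(K',\QQ/\ZZ)\to\Hom(K,\QQ/\ZZ)$ obtained by applying the exact contravariant functor $\Hom_\ZZ(-,\QQ/\ZZ)$ to $f^*\colon K\to K'$. Because $\QQ/\ZZ$ is an injective $\ZZ$-module, that functor is exact on the category of finite abelian groups and sends $\ker$ to $\coker$ and $\coker$ to $\ker$; hence $\ker((f^t)^*)\approx \coker(f^*)$ and $\coker((f^t)^*)\approx\ker(f^*)$, and isomorphic arrows have isomorphic kernels and cokernels. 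One subtlety to flag: one should note $K$ is genuinely finite (not merely finitely generated) before invoking Pontryagin duality — this is automatic here since $\kappa(G)<\infty$, but in the abstract $\Adj$ setting it is the hypothesis that $\partial\partial^t$ has full rank, equivalently $Z\cap(\ker\partial^t)$-type degeneracies are absent; I would state this explicitly so the argument is not circular. The main obstacle, then, is not conceptual but bookkeeping: making the commutativity computation in the second paragraph airtight.
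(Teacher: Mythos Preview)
Your proposal is correct and follows the same approach the paper indicates: the paper does not give a self-contained proof but instead records exactly the two ingredients you develop---the nondegenerate $\QQ/\ZZ$-valued pairing $(-,-)$ on $K$ giving $K\approx\Hom_\ZZ(K,\QQ/\ZZ)$, and the adjointness relation $(f^*(x),y)=(x,(f^t)^*(y))$---and then cites \cite[Proposition~2.5]{lineCrit} and \cite[Proposition~9]{treumann} for the details. Your write-up is essentially what those references contain, so there is no divergence in method, only in level of detail.
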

It is a fact that $K$ is equipped with a non-degenerate
$\QQ/\ZZ$-valued bilinear form $(-,-)$, see
\cite[p.170]{bacherEtAlia}, \cite[p.3]{treumann}.  The identification
of $K$ with $\Hom_\ZZ(K,\QQ/\ZZ)$ is via $x \mapsto (x,-)$.  For this
inner product we have the relation,
\[
( f^*(x), y) = (x, {f^t}^*(y) ).
\]
See \cite[Proposition 2.5]{lineCrit} or \cite[Proposition~9]{treumann}
for the proof of the proposition.

We now come to an important technical lemma.
\begin{lemma}\label{lem:f0}
  Let $(\partial,\partial^t)$ and $(\partial',(\partial')^t)$ be two
  objects in $\Adj$. Suppose that $f_1 :C_1 \to C_1'$ is a
  $\ZZ$-linear map satisfying $f_1 Z \subset Z'$ and $f_1 B \subset
  B'$.

  Define $f_0: \im \partial \to C_0'$ by $f_0 (\partial x)
  := \partial' f_1(x)$. Then $f:=(f_1,f_0)$ defines a morphism in $\Adj$,
  \[
  ( \partial: C_1 \to \im \partial, \partial^t : \im \partial \to
  C_1) \stackrel{f}{\to} (\partial',(\partial')^t).
  \]
\end{lemma}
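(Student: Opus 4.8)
The plan is to obtain $f_0$ by factoring $\partial'\circ f_1$ through $\im\partial$, and then to check the two intertwining conditions that define a morphism in $\Adj$ --- one of which turns out to be automatic.

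First I would confirm that the proposed source $(\partial : C_1 \to \im\partial,\ \partial^t : \im\partial \to C_1)$ is a legitimate object of $\Adj$. The module $\im\partial$ is finitely generated and free, being a submodule of the free module $C_0$ over the PID $\ZZ$; we give it the inner product obtained by restricting $\langle -,-\rangle$ from $C_0$, which remains positive definite. With respect to this restricted form, $\partial^t|_{\im\partial}$ is the adjoint of $\partial : C_1 \to \im\partial$, since for $e \in C_1$ and $v \in \im\partial \subseteq C_0$ the identity $\langle \partial e, v\rangle = \langle e, \partial^t v\rangle$ already holds in the original pair. (The distinguished orthonormal basis of $\im\partial$ plays no role in what follows, so I would not belabor the choice.) The cycle space of this truncated pair is again $Z$, while its bond space is $\partial^t(\im\partial)\subseteq B$; we will not need to compare the latter with $B$.

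Next comes the construction of $f_0$. Since $\partial : C_1 \to \im\partial$ is surjective by definition with kernel $Z$, it induces an isomorphism $\bar\partial : C_1/Z \to \im\partial$. Now consider $\partial'\circ f_1 : C_1 \to C_0'$. The hypothesis $f_1 Z \subseteq Z' = \ker\partial'$ says precisely that this composite annihilates $Z$, so it descends along $\bar\partial$ to a $\ZZ$-linear map $f_0 : \im\partial \to C_0'$. By construction $f_0(\partial x) = \partial' f_1(x)$, i.e.\ the first morphism identity $f_0\partial = \partial' f_1$ holds exactly. This single step simultaneously yields well-definedness, $\ZZ$-linearity, and the first intertwining condition; it is the only place the hypothesis $f_1 Z \subseteq Z'$ is used, and it also explains why the source object must carry $\im\partial$, rather than all of $C_0$, in degree zero: there is in general no canonical extension of $f_0$ over $C_0$.

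Finally I would verify the second intertwining condition $f_1\partial^t = (\partial')^t f_0 \pmod{B'}$ on $\im\partial$, and here I expect essentially no work. For $v \in \im\partial$ the left-hand side lies in $B'$ because $\partial^t v \in \partial^t(\im\partial) \subseteq B$ and $f_1 B \subseteq B'$, while the right-hand side lies in $B' = \im(\partial')^t$ trivially; hence both sides vanish modulo $B'$ and the congruence holds automatically. The only subtlety --- the one place a reader might waste effort --- is to resist trying to prove the honest equality $f_1\partial^t = (\partial')^t f_0$, which need not hold: it is exactly the quotient by $B'$ in the definition of a morphism that rescues the statement, and the hypothesis $f_1 B \subseteq B'$ is used solely here. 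With both conditions in hand, $(f_1,f_0)$ is a morphism in $\Adj$ of the asserted form.
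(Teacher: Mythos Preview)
Your proof is correct and follows essentially the same route as the paper: well-definedness of $f_0$ via $f_1 Z\subset Z'$, the first intertwining identity holding by construction, and the second holding because both $f_1\partial^t v$ and $(\partial')^t f_0 v$ lie in $B'$ (the paper phrases this last step by writing $v=\partial x$ and observing $f_1(\partial^t\partial x)\in f_1(B)\subset B'$, which is your argument verbatim). Your additional care in checking that the truncated pair is an object of $\Adj$ is a reasonable elaboration the paper omits.
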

\begin{proof}
  This is well defined since $f_1$ takes $Z$ into $Z'$. We only need
  to check that the second intertwining condition is satisfied, since
  the first is satisfied by definition. For this we must have that
  $f_1$ commutes, up to $B'$, with the down-up Laplacian:
  \[
  f_1 ((\partial^t \partial) x) = ((\partial')^t \partial') f_1(x) \mod B'.
  \]
  On the left, since $(\partial^t \partial) x$ is in $B$ and $f_1$
  takes $B$ to $B'$ we obtain an element of $B'$. The element of the
  right is patently in $B'$, thus both sides are equal modulo $B'$.
\end{proof}
The point of this result is that if one has defined $f_1$ and it
behaves sufficiently well then $f_0$ is essentially determined by
$f_1$, modulo data that the critical group cannot see. Indeed, if
$(\partial,\partial^t)$ is in $\Adj$ then its critical group is equal
to that of
\[
( \partial: C_1 \to \im \partial, \partial^t : \im \partial \to C_1) \in \Adj.
\]
This follows since the image of $\partial^t: \im \partial \to C_1$ is
equal to the image of $\partial^t: C_0 \to C_1$.
\section{Bicycles}\label{sec:bicycles}
We maintain the notation of adjoint pairs from the previous
section. Let $p$ be a prime number. The \textbf{$p$-bicycles} of an
adjoint pair $(\partial,\partial^t)$ are the elements of
\[
\Hom(K,\ZZ/p).
\]
Since $K$ is equipped with a non-degenerate $\QQ/\ZZ$-valued bilinear
form the $p$-bicycles are naturally identified with $K/pK$. Indeed,
every map $K \to \ZZ/p$ can be thought of as a map $K \to
\{0,1/p,\dots,(p-1)/p\} \subset \QQ/\ZZ$, and this map is of the form
$(x,-)$ for some $x \in K$. Mapping $(x,-)$ to $x + pK$ gives the
desired identification.

A morphism $f:(\partial,\partial^t) \to (\partial',(\partial')^t)$ in
$\Adj$ gives rise to a natural map
\[
f: K/pK \to K'/pK'
\]
which is just reduction of $f$ by $p$. Given an adjoint pair
$(\partial,\partial^t)$ we will denote the reduction by $p$ of the
associated modules $Z$ and $B$ by $Z^{\ZZ/p}$ and $B^{\ZZ/p}$.

\begin{proposition}
  There is a commutative square
  \[
  \xymatrix{
    K/pK  \ar[d]^{\approx} & \ar[l]^{{f^t}^*} K'/pK'  \ar[d]_{\approx}\\
    \Hom(K,\ZZ/p) \ar@{<-}[r]^{\circ f^*} & \Hom(K',\ZZ/p) }
  \]
  If $\coker(f^*)$ is all $p$-torsion, there is a natural isomorphism
  $\coker(f^*) \approx \ker({f^t}^*)$.
\end{proposition}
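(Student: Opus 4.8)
The plan is to obtain the whole statement from Proposition~\ref{prop:pontryagin} by reducing modulo $p$, so that essentially no new idea is needed beyond careful bookkeeping.

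First I would identify the maps in the square. The bottom arrow, precomposition with $f^* : K \to K'$, is the restriction of the corresponding arrow in the square of Proposition~\ref{prop:pontryagin} to $p$-torsion characters (a homomorphism $K' \to \ZZ/p$ being viewed as a homomorphism $K' \to \tfrac1p\ZZ/\ZZ \subset \QQ/\ZZ$). The top arrow is the reduction mod $p$ of $(f^t)^* : K' \to K$, and the vertical arrows are the isomorphisms $K/pK \approx \Hom(K,\ZZ/p)$, $K'/pK' \approx \Hom(K',\ZZ/p)$ recalled at the start of this section, which are precisely the mod-$p$ shadows of the Pontryagin identifications $K \approx \Hom(K,\QQ/\ZZ)$ and $K' \approx \Hom(K',\QQ/\ZZ)$. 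Commutativity is then inherited from the $\QQ/\ZZ$-level square: tracking $y + pK'$, one route produces the character $x \mapsto (f^*(x), y)$ on $K$, which by the adjunction relation $(f^*(x),y) = (x,(f^t)^*(y))$ equals $((f^t)^*(y),-)$, and this is exactly the image of $(f^t)^*(y) + pK$ under the other route.

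For the asserted isomorphism I would use the square to carry $\ker\big((f^t)^* : K'/pK' \to K/pK\big)$ isomorphically onto $\ker\big(\circ f^* : \Hom(K',\ZZ/p) \to \Hom(K,\ZZ/p)\big)$. A homomorphism $\chi : K' \to \ZZ/p$ is killed by precomposition with $f^*$ exactly when it vanishes on $\im(f^*)$, i.e. when it factors through $\coker(f^*) = K'/\im(f^*)$; hence
\[
\ker\big((f^t)^*\big) \;\approx\; \Hom\big(\coker(f^*),\,\ZZ/p\big).
\]
Now the hypothesis enters: since $\coker(f^*)$ is annihilated by $p$ it is a finite $\ZZ/p$-vector space, and any finite $\ZZ/p$-vector space $V$ satisfies $\Hom(V,\ZZ/p) \approx V$; taking $V = \coker(f^*)$ finishes the identification. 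I would also note that ``$\coker(f^*)$ is $p$-torsion'' forces $pK' \subseteq \im(f^*)$, so the integral cokernel $\coker(f^* : K \to K')$ named in the statement coincides with the cokernel of the mod-$p$ map $K/pK \to K'/pK'$; this links the group in the hypothesis to the purely mod-$p$ objects appearing on the right.

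The only delicate point is the compatibility underlying the first step — that the identification $K/pK \approx \Hom(K,\ZZ/p)$ is genuinely the reduction mod $p$ of $K \approx \Hom(K,\QQ/\ZZ)$, so that the two commutative squares sit over one another — and this is really a matter of unwinding the definitions from the start of the section. Beyond that, the one genuine pitfall is semantic: the final step $\Hom(V,\ZZ/p) \approx V$ requires $V$ to be \emph{killed by} $p$ (it is false for $V = \ZZ/p^2$ regarded as an abelian group), so the proof must read ``$p$-torsion'' in that strong sense, consistent with how the term is used elsewhere in the paper.
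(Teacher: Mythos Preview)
Your proposal is correct and follows essentially the same route as the paper: commutativity of the square is deduced from the adjunction identity $(f^*(x),y)=(x,(f^t)^*(y))$ of Proposition~\ref{prop:pontryagin} reduced mod $p$, and the isomorphism comes from identifying $\ker(\circ f^*)$ with $\Hom(\coker(f^*),\ZZ/p)$ (the paper phrases this as left exactness of $\Hom_\ZZ(-,\ZZ/p)$) and then using that a finite group killed by $p$ is self-dual over $\ZZ/p$. Your extra remarks about $pK'\subseteq\im(f^*)$ and the precise meaning of ``$p$-torsion'' are helpful clarifications but do not change the argument.
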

\begin{proof}
  Let $(-,-)$ denote the $\QQ/\ZZ$-valued bilinear forms on $K$ and
  $K'$. The commutativity of the diagram boils down to the equality
  $(f(x),y) = (x,f^t(y))$, which holds by
  Proposition~\ref{prop:pontryagin}, reduced modulo
  $\frac{1}{p}(\QQ/\ZZ) \subset \QQ/\ZZ$.

  Since $\Hom_\ZZ(-,\ZZ/p)$ is right exact, the kernel of
  $(f^t)^*:K'/pK' \to K/pK$ is identified with
  $\Hom_\ZZ(\coker(f^*),\ZZ/p)$. Since $\coker(f^*)$ is assumed to be
  $p$-torsion, $\Hom_\ZZ(\coker(f^*),\ZZ/p) = \coker(f^*)$.
\end{proof}

\begin{proposition}\label{prop:2bicycles}
  The $p$-bicycles of $(\partial,\partial^t)$ are naturally identified
  with $Z^{\ZZ/p} \cap B^{\ZZ/p}$. This association is functorial in
  the sense that if $f: (\partial, \partial^t) \to (\partial',
  (\partial')^t)$ is a morphism, then there is a commutative diagram,
  \[
  \xymatrix{
    K/pK \ar[r]^{f^*}\ar[d]_{\approx} & K'/p K'\ar[d]^{\approx} \\
    Z^{\ZZ/p} \cap B^{\ZZ/p} \ar[r]_{f_1} & (Z')^{\ZZ/p} \cap (B')^{\ZZ/p}\\
  }
  \]
\end{proposition}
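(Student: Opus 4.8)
The plan is to realize both sides of the claimed isomorphism as subspaces of $\bar C_1 := C_1/pC_1 = (\ZZ/p)C_1$, which inherits the mod-$p$ reduction of the standard inner product, a non-degenerate symmetric bilinear form. The first step would be to identify $\Hom(K,\ZZ/p)$ with a subspace of $\bar C_1$: a functional $K = C_1/(Z+B) \to \ZZ/p$ is the same thing as a functional $C_1 \to \ZZ/p$ killing $Z$ and $B$; since the target is $p$-torsion, such a functional factors through $\bar C_1$, and using the form we may write it uniquely as $\langle \bar y,-\rangle$ for a unique $\bar y \in \bar C_1$. The conditions of killing $Z$ and killing $B$ translate exactly into $\bar y \perp Z^{\ZZ/p}$ and $\bar y \perp B^{\ZZ/p}$, so
\[
\Hom(K,\ZZ/p)\;\cong\;\bigl(Z^{\ZZ/p}+B^{\ZZ/p}\bigr)^{\perp}\subseteq\bar C_1 .
\]

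The second step, the only non-formal one, is the identity $\bigl(Z^{\ZZ/p}+B^{\ZZ/p}\bigr)^{\perp}=Z^{\ZZ/p}\cap B^{\ZZ/p}$. Over $\ZZ$ the sublattice $Z=\ker\partial$ is primitive; provided $B=\im\partial^t$ is also primitive (discussed below), it is the full orthogonal complement $Z^{\perp}$, since it is orthogonal to $Z$ with $\rank Z+\rank B=\rank C_1$. Primitivity of $Z$ and $B$ makes the maps $Z/pZ\to\bar C_1$ and $B/pB\to\bar C_1$ injective, identifying $Z^{\ZZ/p}$, $B^{\ZZ/p}$ with subspaces of the expected dimensions; then $B^{\ZZ/p}=(Z^{\ZZ/p})^{\perp}$, and by non-degeneracy of the form on the finite-dimensional $\ZZ/p$-space $\bar C_1$ also $Z^{\ZZ/p}=(B^{\ZZ/p})^{\perp}$. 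The elementary identity $(U+W)^{\perp}=U^{\perp}\cap W^{\perp}$ now gives $(Z^{\ZZ/p}+B^{\ZZ/p})^{\perp}=(Z^{\ZZ/p})^{\perp}\cap(B^{\ZZ/p})^{\perp}=B^{\ZZ/p}\cap Z^{\ZZ/p}$. Composing with the isomorphism $K/pK\cong\Hom(K,\ZZ/p)$ recorded above (coming from the $\QQ/\ZZ$-valued form on $K$) proves the first assertion.

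For functoriality, observe first that $f_1(Z)\subseteq Z'$ and $f_1(B)\subseteq B'$ force the reduction $\overline{f_1}$ to carry $Z^{\ZZ/p}\cap B^{\ZZ/p}$ into $(Z')^{\ZZ/p}\cap(B')^{\ZZ/p}$, so the lower horizontal arrow of the diagram is defined. To see it agrees with $f^*$, I would apply the (mod $p$) commutative square of Proposition~\ref{prop:pontryagin} to the adjoint morphism $f^t$ (which lies in $\Adj$, with $(f^t)^t=f$): this shows that under $K/pK\cong\Hom(K,\ZZ/p)$ the map $f^*$ corresponds to precomposition with $(f^t)^*:K'/pK'\to K/pK$. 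Since $(f^t)^*$ is induced by $f_1^t$, tracing $\chi=\langle\bar y,-\rangle$ through this precomposition and using adjointness, $\langle\bar y,\overline{f_1^t}(-)\rangle=\langle\overline{f_1}(\bar y),-\rangle$, shows that $\chi\circ(f^t)^*$ corresponds to $\overline{f_1}(\bar y)$. Hence the square commutes.

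The step I expect to carry the real content is the second one, since it is the only place where the lattice structure, rather than formal nonsense, enters, and it relies on $B$ being primitive in $C_1$, equivalently $B=Z^{\perp}$. This is automatic when $Z$ and $B$ are the cycle and bond lattices of a graph, the bond lattice being the full orthogonal complement of the cycle lattice; I would make this hypothesis explicit, as for a general adjoint pair $B$ need not be saturated and the statement can fail.
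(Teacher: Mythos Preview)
Your argument is correct and complete (modulo the hypothesis you flag), but it is a genuinely different proof from the paper's. The paper works on the $C_0$ side: it uses the self-adjointness of the Laplacian to split $C_0/pC_0=\ker(\overline{\partial\partial^t})\oplus\im(\overline{\partial\partial^t})$, invokes the decomposition $\coker(\partial\partial^t)\cong K\oplus\coker(\partial)$ from Appendix~A, and then sends $v\in\ker(\overline{\partial\partial^t})$ to $\bar\partial^t v\in Z^{\ZZ/p}\cap B^{\ZZ/p}$, the kernel of this last map being exactly the $\coker(\partial)$ summand. You instead stay entirely on the $C_1$ side, realizing $\Hom(K,\ZZ/p)$ as $(\bar Z+\bar B)^\perp\subset\bar C_1$ and then proving $(\bar Z+\bar B)^\perp=Z^{\ZZ/p}\cap B^{\ZZ/p}$ via $(B^{\ZZ/p})^\perp=Z^{\ZZ/p}$ and $(Z^{\ZZ/p})^\perp=B^{\ZZ/p}$; your functoriality argument via the adjoint square is also different from (and a bit more explicit than) the paper's one-line appeal to the intertwining relations. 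Your route is more elementary in that it never mentions the Laplacian or the splitting of its cokernel, and it has the virtue of making the hidden hypothesis visible: both proofs actually need $\coker(\partial)$ to be torsion-free (the paper uses it implicitly through the direct-sum decomposition of $\coker(\partial\partial^t)$), and for graphs this is automatic. One small correction: the condition you want is exactly that $\coker(\partial)$ have no $p$-torsion, which is equivalent to $B$ being $p$-saturated in $C_1$; note that asking $B$ to be primitive in $C_1$ for \emph{all} $p$ is equivalent to $\coker(\partial)$ being torsion-free, not to $K=0$, so your caveat is the right one and is satisfied in the graph case.
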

\begin{proof}
  There is a direct sum decomposition
  \[
  C_0/pC_0 = \ker( \partial \partial^t ) \oplus
  \im( \partial \partial^t ),
  \]
  since $\partial \partial^t$ is self-adjoint. This yields an
  isomorphism of $\ZZ/p$-vector spaces
  \[
  \ker( \partial \partial^t ) \approx \coker(\partial \partial^t) = K/pK
  \oplus \coker(\partial).
  \]
  Now, map an element $x$ in $\ker \partial \partial^t$ to $\partial^t
  x$. The kernel of this map is precisely $\coker(\partial)$, and its
  image is all of $Z^{\ZZ/p} \cap B^{\ZZ/p}$. This proves the first part of the
  result.

  The commutativity of the square follows directly from the relations 
  \[
  f_0 \partial = \partial' f_1, \qquad f_1 \partial^t = (\partial')^t
  f_0,
  \]
  and the fact that $f^*$ is represented either by $f_0^*$ or $f_1^*$.
\end{proof}

By Lemma~\ref{lem:f0}, we do not need to have the full data of a
morphism in $\Adj$ to reach the conclusion of the previous
proposition.
\begin{corollary}
  Let $(\partial,\partial^t)$ and $(\partial',(\partial')^t)$ be two
  objects in $\Adj$. Suppose that $f_1 :C_1 \to C_1'$ is a
  $\ZZ$-linear map satisfying $f_1 Z \subset Z'$ and $f_1 B \subset
  B'$. Then, there is a commutative diagram
  \[
  \xymatrix{
    K/pK \ar[r]^{f^*}\ar[d]_{\approx} & K'/p K'\ar[d]^{\approx} \\
    Z^{\ZZ/p} \cap B^{\ZZ/p} \ar[r]_{f_1} & (Z')^{\ZZ/p} \cap (B')^{\ZZ/p}
  }
  \]
\end{corollary}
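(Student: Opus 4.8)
The plan is to deduce this from Lemma~\ref{lem:f0} and Proposition~\ref{prop:2bicycles}. The hypotheses placed on $f_1$ here are exactly the hypotheses of Lemma~\ref{lem:f0}, so the first step is to invoke that lemma to manufacture a companion map $f_0 : \im\partial \to C_0'$, namely $f_0(\partial x) := \partial' f_1(x)$, such that $f := (f_1, f_0)$ is a genuine morphism in $\Adj$ from the restricted pair $(\partial : C_1 \to \im\partial,\ \partial^t : \im\partial \to C_1)$ to $(\partial', (\partial')^t)$. Note that Lemma~\ref{lem:f0} only restricts the source; the target $(\partial', (\partial')^t)$ is left as it is.

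The second step is to record the identifications already observed in the discussion following Lemma~\ref{lem:f0}: since the image of $\partial^t : \im\partial \to C_1$ coincides with the image of $\partial^t : C_0 \to C_1$, the subspace $Z + B$ is unchanged on passing to the restricted pair, so the restricted pair has the same critical group $K$ as $(\partial,\partial^t)$. In particular it has the same cycle and bond spaces $Z$ and $B$, hence the same mod-$p$ reductions $Z^{\ZZ/p}$, $B^{\ZZ/p}$, and the same bicycle space $Z^{\ZZ/p} \cap B^{\ZZ/p}$.

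The third step is simply to apply Proposition~\ref{prop:2bicycles} to the morphism $f$ produced in the first step. Its conclusion is precisely the commutative square in the statement, once the source critical group $K/pK$ of the restricted pair is identified, via the second step, with $K/pK$ of $(\partial,\partial^t)$ and its bicycle space with $Z^{\ZZ/p}\cap B^{\ZZ/p}$; the right-hand column and the bottom-right corner are literally those of $(\partial',(\partial')^t)$, which was never altered. The horizontal map labeled $f^*$ is the one induced on critical groups by the morphism $f$, and by the theorem of Treumann quoted above it is immaterial whether this is computed through $f_1^*$ or $f_0^*$.

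There is essentially no obstacle here beyond bookkeeping: the only thing to check with any care is that replacing $(\partial,\partial^t)$ by its restriction $(\partial : C_1 \to \im\partial,\ \partial^t : \im\partial \to C_1)$ changes none of the objects $K/pK$, $Z^{\ZZ/p}$, $B^{\ZZ/p}$ appearing in the diagram, so that Proposition~\ref{prop:2bicycles}, stated for a morphism out of an arbitrary object of $\Adj$, can be read off verbatim with the restricted pair as source.
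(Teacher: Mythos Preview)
Your proposal is correct and follows exactly the route the paper intends: the paper itself does not write out a proof of this corollary but introduces it with the sentence ``By Lemma~\ref{lem:f0}, we do not need to have the full data of a morphism in $\Adj$ to reach the conclusion of the previous proposition,'' and your three steps are precisely the unpacking of that sentence. The bookkeeping you flag---that the restricted pair $(\partial:C_1\to\im\partial,\ \partial^t:\im\partial\to C_1)$ has the same $K$, $Z$, $B$ as the original---is exactly what the paper records immediately after Lemma~\ref{lem:f0}.
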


\section*{Acknowledgments} Thanks are due to an anonymous referee for
a careful reading of the manuscript.

\bibliography{inv}{}
\bibliographystyle{plain}

\end{document}